\journal{Mechanical Systems and Signal Processing}
\begin{document}

\begin{frontmatter}

%% Title, authors and addresses

%% use the tnoteref command within \title for footnotes;
%% use the tnotetext command for theassociated footnote;
%% use the fnref command within \author or \affiliation for footnotes;
%% use the fntext command for theassociated footnote;
%% use the corref command within \author for corresponding author footnotes;
%% use the cortext command for theassociated footnote;
%% use the ead command for the email address,
%% and the form \ead[url] for the home page:
%% \title{Title\tnoteref{label1}}
%% \tnotetext[label1]{}
%% \author{Name\corref{cor1}\fnref{label2}}
%% \ead{email address}
%% \ead[url]{home page}
%% \fntext[label2]{}
%% \cortext[cor1]{}
%% \affiliation{organization={},
%%             addressline={},
%%             city={},
%%             postcode={},
%%             state={},
%%             country={}}
%% \fntext[label3]{}

\title{Adaptive Reduced Order Modelling of Discrete-Time Systems with Input-Output Dead Time}

%% use optional labels to link authors explicitly to addresses:
%% \author[label1,label2]{}
%% \affiliation[label1]{organization={},
%%             addressline={},
%%             city={},
%%             postcode={},
%%             state={},
%%             country={}}
%%
%% \affiliation[label2]{organization={},
%%             addressline={},
%%             city={},
%%             postcode={},
%%             state={},
%%             country={}}

\author[1]{Art J. R. Pelling} %% Author name
\author[1]{Ennes Sarradj}

%% Author affiliation
\affiliation[1]{organization={Department of Engineering Acoustics, Technische Universtiät Berlin},%Department and Organization
            addressline={Einsteinufer 25}, 
            city={Berlin},
            postcode={10587}, 
            state={Berlin},
            country={Germany}}

%% Abstract
\begin{abstract}
While many acoustic systems are well-modelled by linear time-invariant dynamical systems, high-fidelity models often become computationally expensive due the complexity of dynamics. Reduced order modelling techniques, such as the Eigensystem Realization Algorithm (ERA), can be used to create efficient surrogate models from measurement data, particularly impulse responses. However, practical challenges remain, including the presence of input-output dead times, i.e. propagation delays, in the data, which can increase model order and introduce artifacts like pre-ringing. This paper introduces an improved technique for the extraction of dead times, by formulating a linear program to separate input and output dead times from the data. Additionally, the paper presents an adaptive randomized ERA pipeline that leverages recent advances in numerical linear algebra to reduce computational complexity and enabling scalable model reduction. Benchmarking on large-scale datasets of measured room impulse responses demonstrates that the propsed dead time extraction scheme yields more accurate and efficient reduced order models compared to previous approaches. The implementation is made available as open-source Python code, facilitating reproducibility and further research.
\end{abstract}

%%Graphical abstract
%\begin{graphicalabstract}
%\includegraphics{grabs}
%\end{graphicalabstract}

%%Research highlights
\begin{highlights}
    \item Randomized ERA is combined with a recent randomized error estimator that allows adaptive identification of reduced order state space models.
    \item Improved dead time extraction for MIMO systems via linear program.
    \item Largest application of ERA to date in terms of input data dimension and order of the constructed models.
    \item Correction of the classical ERA error bound due to a typing error in the original work.
\end{highlights}

%% Keywords
\begin{keyword}
%% keywords here, in the form: keyword \sep keyword
acoustics \sep dynamical systems \sep model order reduction \sep randomized \sep Eigensystem Realization Algorithm \sep room impulse response

%% PACS codes here, in the form: \PACS code \sep code

%% MSC codes here, in the form: \MSC code \sep code
%% or \MSC[2008] code \sep code (2000 is the default)
\MSC[2020] 93B15 \sep 93B30 \sep 93A15 \sep 93C55 \sep 68W20
\end{keyword}

\end{frontmatter}

%% Add \usepackage{lineno} before \begin{document} and uncomment 
%% following line to enable line numbers
%% \linenumbers

%% main text
%%
\section{Introduction}
Many acoustic systems can be modelled as \ac{lti} dynamical systems. \ac{lti} systems possess several favourable properties and are well-studied. However, for practically relevant acoustical engineering tasks, one oftentimes finds that \ac{lti} models can become large because many acoustic systems, although being linear, encode complex wave phenomena. It can be necessary to employ more efficient surrogate models when these models are part of a larger ensemble of multi-physics simulations or need to be evaluated perpetually. For this reason, the discipline of \ac{mor} \cite{antoulas2005,benner2017} has gained traction in acoustical engineering recently. \ac{mor} methods provide ways to represent complex dynamical systems with fewer parameters while maintaining accuracy. Applications of \ac{mor} can be found in acoustics whenever high-fidelity models become computationally expensive or impractical, e.g. in vibro-acoustics \cite{aumann2019,aumann2021,aumann2022}, for modelling of musical instruments \cite{
maestre2021,rettberg2023}, in thermoacoustics \cite{emmert2016,emmert2016a,brokof2023,meindl2016}, and in ocean acoustics \cite{candy2020,candy2021,candy2021a}. We refer to \cite{deckers2021} for an overview of \ac{mor} methods in acoustics and to the references therein for further applications.

In the field of \ac{mor}, \ac{lti} models in so-called state space form are predominantly considered over more classical \ac{ir} or transfer function models. As argued in \cite{pelling2021}, state space models can be superior, especially when considering \ac{mimo} systems. In the latter architectures, a \ac{mimo} system is modelled as a collection of independent \ac{siso} systems, which means that the model size and computational complexity scales with the number of inputs times the number of outputs, as is illustrated in \cref{fig:naive}. With a state space architecture, on the other hand, redundancies in the in- and outputs are avoided. The model complexity is determined mostly by the internal dynamics of the system, denoted by \(\mathcal{S}\) in \cref{fig:ssm}.
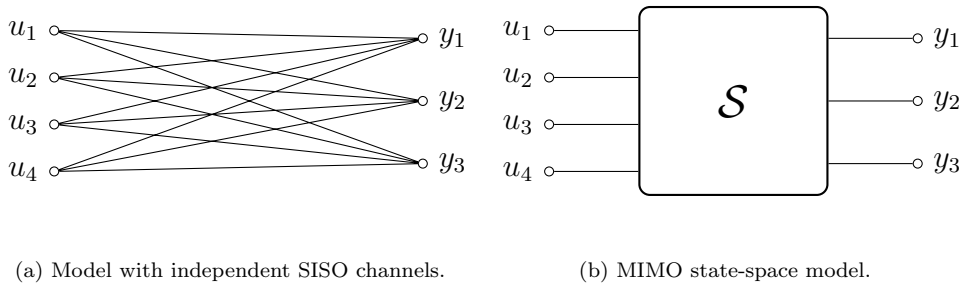
\begin{figure}[h]
    \centering
    \begin{subfigure}{.48\textwidth}
    \centering    
    \pgfmathsetmacro{\m}{4}
\pgfmathsetmacro{\p}{3}
\begin{tikzpicture}
\node at (-2,0) {};
\node at (2+\bsize,\bsize) {};
\draw
\foreach \i in {1,...,\m} {
    node (in\i) at (-1.2cm,\pos{\i}{\m}) [draw,circle,label=left:\(u_{\i}\),inner sep=1.2pt] {}
};
\draw
\foreach \j in {1,...,\p} {
    node (out\j) at (1.2cm+\bsize,\pos{\j}{\p}) [draw,circle,label=right:\(y_{\j}\),inner sep=1.2pt] {}
};
\draw
\foreach \i in {1,...,\m} {
\foreach \j in {1,...,\p} {
(in\i) -- (out\j)
}};
\end{tikzpicture}
    \caption{Model with independent \ac{siso} channels.}
    \label{fig:naive}    
    \end{subfigure}%
    \begin{subfigure}{.48\textwidth}
    \centering    
    \pgfmathsetmacro{\m}{4}
\pgfmathsetmacro{\p}{3}
\begin{tikzpicture}
\node at (-2,0) {};
\node at (2+\bsize,\bsize) {};
\draw
node [myblock] (sys) at (\bsize/2,\bsize/2) {\Large\(\mathcal{S}\)};
\draw
\foreach \i in {1,...,\m} {
    node (in\i) at (-1.2cm,\pos{\i}{\m}) [draw,circle,label=left:\(u_{\i}\),inner sep=1.2pt] {}
    (in\i) -- ([yshift=(\pos{\i}{\m}-\bsize/2)]sys.west)
};
\draw
\foreach \j in {1,...,\p} {
    node (out\j) at (1.2cm+\bsize,\pos{\j}{\p}) [draw,circle,label=right:\(y_{\j}\),inner sep=1.2pt] {}
    (out\j) -- ([yshift=(\pos{\j}{\p}-\bsize/2)]sys.east)
};
\end{tikzpicture}       
    \caption{\ac{mimo} state-space model.}    
    \label{fig:ssm}    
    \end{subfigure}
    \caption{Schematic representation of different \ac{lti} model structures for a \ac{mimo} system with four inputs \(u_1,u_2,u_3,u_4\) and three outputs \(y_1,y_2,y_3\).}
    \label{fig:model-comparison}
\end{figure}

The above-mentioned applications of \ac{mor} in acoustics deal with the reduction process of a so-called \ac{fom} in state space form that has been obtained either by first order principles or the discretization of a partial differential equation, e.g. by the finite element method. The reduction process usually entails some form of projection of the \acp{dof} of the \ac{fom} onto a smaller subspace, by which a \ac{rom} is obtained. In this work, we will consider a scenario where a \ac{fom} is unavailable and, instead, one has to rely on measurement data --- a scenario that is often encountered in acoustical engineering because many model properties needed for a constructive modelling approach are not determined. For example, material parameters like angle-dependent absorption and scattering coefficients or (boundary) domain geometries might not be known to a required accuracy that can ensure a high-fidelity model. Precisely because constructive modelling methods can be highly sensitive to these input properties, experimental measurements of real-world acoustic systems are common practice in the field and, in turn, measurement data is abundantly available.

The problem of obtaining a \ac{rom} not by the reduction of a \ac{fom} but from measurement data is known as data-driven \ac{mor} or \emph{reduced order modelling}, as a \ac{rom} is obtained directly from the data without constructing a \ac{fom} from the data first. An established way of experimental characterization of dynamical systems in acoustics is the measurement of \acp{ir}, making this type of data ubiquitous. The so-called \ac{era} or Ho-Kalman algorithm \cite{juang1985,ho1966a} is a reduced order modelling method that has been successfully employed for acoustical modelling and requires input data in the form of \acp{ir}. Thus, \ac{era} is a natural choice and will be considered in this work. \cref{fig:d1-tf} offers a qualitative impression of \acp{rom} that can be obtained from measured \acp{ir} of acoustic systems with \ac{era}. The plot depicts a single channel frequency response of different \acp{rom}, which are actually \ac{mimo} systems (as schematically suggested in \cref{fig:ssm}). A detailed description of the considered \ac{ir} measurement data and constructed \acp{rom} is provided later in this document.
\begin{figure}
\centering
\begin{subfigure}{.5\textwidth}
    %\captionsetup{aboveskip=-1em}
    \centering
    \includegraphics{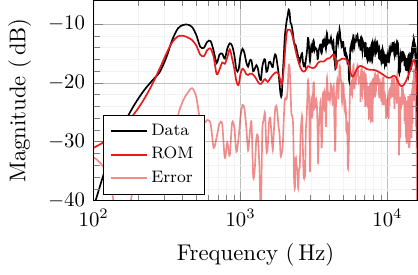}
    \caption{\ac{rom} of order \(380\).}
\end{subfigure}%
\begin{subfigure}{.5\textwidth}
    %\captionsetup{aboveskip=-1em}
    \hfill
    \includegraphics{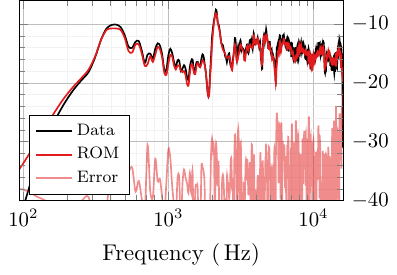}
    \caption{\ac{rom} of order \(3330\).}
\end{subfigure}
\caption{Frequency responses of different \acp{rom} for the \acs{miracle}-D1 scenario. Each subplot depicts the frequency response of the \ac{rom} alongside the magnitude of the Fourier-transformed \ac{ir} measurement and the resulting modelling error. The \acp{rom} are constructed with \ac{era} according to \cref{sec:results}. Each \ac{rom} is actually a \ac{mimo} system with \(1024\) inputs and \(64\) outputs; the subplots only contain the transmission between the first input and first output.\label{fig:d1-tf}}
\end{figure}

The first applications of \ac{era} to acoustic systems identify  \acp{rom} for the control of an acoustic duct \cite{hong1996} and the modelling of aerodynamic forces on a plate \cite{frampton1996}, respectively. Notably, \ac{era} has been employed in fluid dynamics \cite{brunton2014,ma2011,kramer2016}, structural vibration \cite{caicedo2011,lam2011,wang2020a}, and aero-acoustic propagation modelling \cite{ketcham2012}. Further, it has found recent use for virtual acoustic rendering in the form of the reduced order modelling of \acp{hrtf} \cite{adams2008,georgiou1999,georgiou2000,mackenzie1997} and \acp{rir} \cite{pelling2021,kujawski2024,hilgemann2023}\footnote{Although slightly different, the method in \cite{hilgemann2023} can be considered equivalent to \ac{era}.}.

The majority of aforementioned applications of \ac{era} in acoustics consider medium-sized systems with short \acp{ir} and a few in- and outputs, as the classical \ac{era} approach becomes infeasible for high-dimensional measurement data. Its main computational bottleneck is posed by a \ac{svd} of an associated Hankel matrix. The computational complexity of \ac{svd} scales cubically with the data dimension, i.e. the number of samples that the measured \acp{ir} contain. A few straightforward computational improvements are offered in \cite{ketcham2012,hilgemann2023} that avoid unnecessary computations by considering the rank of the Hankel matrix. While being more efficient, these approaches do not solve the fundamental problem of scalability. As \ac{ir} measurements of acoustical systems can regularly contain millions of samples, the applicability of \ac{era} in acoustical engineering was limited. Recently, it has been demonstrated in \cite{pelling2021,kujawski2024} that \ac{era} can be applied to substantially larger datasets when leveraging recent randomized matrix approximation methods \cite{minster2021,halko2011,martinsson2020}. This fundamental shift in computational efficiency makes \ac{era} a feasible tool for reduced order modelling in nearly all practical scenarios. Nonetheless, to turn randomized \ac{era} into an adept reduced order modelling tool for engineering practice, several challenges remain:

Firstly, due to source-receiver distances, acoustic \ac{ir} measurements can contain large propagation delays. An unstructured \ac{rom} may require numerous states to resolve these propagation delays correctly, even if the underlying delay-free dynamics can be well modelled by a low order \ac{rom}. Propagation delays not only increase the length of \ac{ir} measurements but can lead to pre-ringing or \emph{impulse smearing} artifacts in \acp{rom} that are not large enough to fully resolve the propagation delays, as reported in \cite{pelling2021,adams2008}. As a remedy, the prevalent approach so far has been to remove the least-common propagation delay in the measurements before employing \ac{era} \cite{hilgemann2023,pelling2021,adams2008,kujawski2024,georgiou2000} and then reapplying the propagation delays to the identified \ac{rom}. While this approach is simple and practical, oftentimes the propagation delays are not compensated for completely. In this work, we will introduce an improved technique that can compensate for a wider range of propagation delays, leading to improved \acp{rom}.

\begin{remark}
The term \emph{delay} finds multiple use in different scientific areas. Importantly, in the context of dynamical systems and \ac{mor}, it conventionally refers to delay differential (algebraic) equations \cite{ha2016,fridman2014} that possess delays in the state. Therefore, we will use the term \emph{dead time} from now on, a terminology found
in control theory \cite{meinsma2000,normey-rico2007}, whenever we refer to propagation delays in the inputs and outputs to avoid confusion.
\end{remark}

A second issue of practical nature concerns the computational workflow of randomized \ac{era}. As the minimum required order of a \ac{rom} that possesses the desired accuracy is not known a priori, finding a small, yet accurate, \ac{rom} can entail multiple computations of increasing order or computing a large \ac{rom} by vastly overestimating the required order and subsequently reducing the obtained \ac{rom} further with common \ac{mor} methods. In effect, both design approaches can be lengthy as they contain unnecessary or repeated computations. We tackle this problem by reformulating the randomized \ac{era} procedure outlined in \cite{minster2021} as an adaptive pipeline. To this end, we employ an efficient heuristic randomized \ac{loo} error estimator \cite{epperly2024}, a memory efficient and updatable QR algorithm \cite{fukaya2020}, and revisit and rectify the classical error bound of \ac{era} \cite{kung1978}. To further facilitate the use of the proposed adaptive randomized \ac{era} algorithm and the reproducibility of reported results, a computationally scalable and efficient reference implementation in the Python programming language is published alongside this work.

The paper is structured as follows. In \cref{sec:preliminaries}, mathematical preliminaries such as notation, discrete-time state space systems, \ac{era}, and the proposed structured dead time systems are introduced. \cref{sec:theory} is dedicated to the proposed improvements of randomized \ac{era} and begins by formally introducing the \ac{dts} problem in \cref{sec:problem}. After a short overview of existing so-called \ac{tde} methods in \cref{sec:tde}, we present our proposed dead time extraction scheme based on a solution of the \ac{dts} problem in \cref{sec:dts}. According to that, the main algorithm is developed in \cref{sec:numerical-implementation}, after which the proposed error estimator is introduced in \cref{sec:err-est}. \cref{sec:benchmarks} introduces the considered benchmark datasets of measured \acp{rir} and computational resources with which the proposed method is analysed in \cref{sec:results}. The analysis contains an investigation of the different existing and introduced error bounds and estimators in \cref{sec:error-bounds-and-estimators}, demonstrates the superiority of the proposed dead time extraction scheme in \cref{sec:dte}, and closes with an assessment of overall performance. Finally, a conclusion is offered in \cref{sec:conclusion}.
\section{Preliminaries}
\label{sec:preliminaries}
\subsection{Notation}
Throughout this document, matrices and vectors are denoted in boldface whereas scalar quantities are not, i.e. \(A\in\C[m][n]\), \(\bx\in\C[n]\) and \(a,N\in\C\), respectively. \(\norm[\mathrm{F}]{A}\) denotes the
Frobenius norm, \(A^{\dagger}\) denotes the Moore-Penrose pseudoinverse and
\(\sigma_{i}(A)\) denotes the \(i\)-th singular value of \(A\). The Hilbert space of
square-summable sequences is denoted by \(\ell_{2}\). The value of any sequence \(a\in\ell_2\) at time index \(t\) is denoted by \(a(t)\).

\subsection{Discrete-time \acs{lti} systems}
A discrete-time system \(\mathcal{S}\) is a mapping from an input \(\bu\in\ell_2^m\) to an output \(\mathcal{S}(\bu)=\by\in\ell_2^p\). It is said to be time-invariant if it does not depend on absolute time. Further, it is called linear, if \(\mathcal{S}(\alpha\bu_1+\beta\bu_2)=\alpha\mathcal{S}(\bu_1)+\beta\mathcal{S}(\bu_2)\).
The dynamical action of a \ac{mimo} discrete-time causal \ac{lti} system with \(m\in\N\) inputs and \(p\in\N\) outputs is fully characterized in the time domain by a convolution
\begin{equation}
\label{eq:convolution}
    \by(k)=\sum_{k=0}^\infty h(t-k)\bu(t),
\end{equation}
where \(h\in\ell_2^{p\times m}\) denotes the system's \ac{ir}. Equivalently, in the frequency domain (or \(\mathcal{Z}\) domain) the transfer function \(\sys(z)\) describes the system dynamics via
\begin{equation*}
    Y(z)=\sys(z)U(z),
\end{equation*}
where \(U(z)\) and \(Y(z)\) denote \(\mathcal{Z}\)-transforms of the input and output, respectively.
Let \(\mathbb{D}=\{z\in\C:|z|<1\}\) denote the open unit disk. The space of all holomorphic functions \(\sys:\mathbb{D}\rightarrow\C^{p\times m}\) satisfying
\begin{equation*}
    \norm[\cH_2^{p\times m}(\mathbb{D})]{\sys}=\biggl(\,\frac{1}{2\pi}\int_{-\pi}^{\pi}\norm[\mathrm{F}]{\sys(e^{\imath\omega})}^2\mathrm{d}\omega\biggr)^{1/2}< \infty
\end{equation*}
forms a so-called Hardy space. By Parseval's theorem, it holds
\begin{equation}
\label{eq:parseval}
    \norm[\cH_2^{p\times m}(\mathbb{D})]{\sys}=\biggl(\,\sum_{k=0}^\infty\norm[\mathrm{F}]{h_k}^2\biggr)^{1/2}.
\end{equation}
The space \(\cH_2(\mathbb{D})^{p\times m}\) will be referred to as \(\cH_2\) for short in the following.

\subsection{Discrete-time state-space models}
\label{sec:dt-ssm}
A more general system formulation can be obtained when augmenting the model by adding an internal state \(\bx\in\ell_2^n\) instead of merely considering the direct mapping from input to output. In this case, the following so-called state equations describe the system dynamics
\begin{equation}
\label{eq:state}
\begin{aligned}
    \bx(t+1)&=A\bx(t)+B\bu(t),\\
    \by(t)&=C\bx(t)+D\bu(t),
\end{aligned}
\end{equation}
where \(A\in\R[n][n]\) represents the internal dynamics, \(B\in\R[n][m]\) is the input-to-state mapping, \(C\in\R[p][n]\) is the state-to-output mapping and \(D\in\R[p][m]\) is the so-called feedthrough.
The state dimension or system order is denoted by \(n\in\N\). 

Under the assumption of zero initial conditions, i.e. \(\bx(0)=0\), a state-space formulation of the transfer function can be obtained as 
\begin{equation*}
    G(z)=C(zI-A)^{-1}B+D
\end{equation*}
by applying a \(\mathcal{Z}\)-transform to \cref{eq:state} and subsequently solving for and substituting \(\bx\). The moments of the transfer function for expansion at infinity
\begin{equation}
\label{eq:mp}
    h_k=\left.\frac{\mathrm{d}^k}{\mathrm{d} z^k}\sys(z)\right|_{z=\infty}=\begin{cases}
        D,&k=0,\\CA^{k-1}B,&k>0,
    \end{cases}
\end{equation}
are called the Markov parameters of \(\sys\). In discrete-time, they are equal to the \ac{ir} of the system. A matrix quadruple \((A,B,C,D)\), that fulfils \cref{eq:mp} is called a realization of the sequence of Markov parameters \(h\in\ell_2^{p\times m}\).

Throughout this document, we will use \(\sys\) to refer to the \ac{lti} system, its realization \(\sys=(A,B,C,D)\), and its so-called system matrix 
\begin{equation*}\sys=
    \begin{array}{c|c}
        A&B\\\hline C&D
    \end{array}\in\R[(n+p)][(n+m)]
\end{equation*}
interchangeably. Further, we denote the system transfer function by \(\sys(z)\); always with its complex argument to avoid confusion. The transfer function \(G(z)\in\htwo\) is a rational function. The degree of \(G(z)\), i.e., the maximum of the degree of the numerator and the degree of the denominator, is often times referred to as the McMillan degree of the function. If the dimension of \(A\) is equal to the McMillan degree, the system is called minimal.
\subsection{Eigensystem Realization Algorithm}
The \ac{era} goes back to \cite{ho1966,ho1966a} and was later improved in \cite{silverman1971,kung1978,juang1985}, amongst others. For a detailed historical overview of the algorithm, we refer the reader to \cite{schutter2000}. We will now state the version of the algorithm outlined in \cite{kramer2016} together with some more recent results below.

Given a finite sequence \(h\in\ell_{2}^{p\times m}\) of \(2s-1\), \(s\in\N\) Markov parameters, a matching state-space model can be identified via the Hankel matrix of Markov parameters
\begin{equation}
\label{eq:hankel-matrix}
  \cH=
  \begin{bmatrix}
      h_1&h_2&\cdots&h_s\\
      h_2&h_3&\cdots&h_{s+1}\\
      \vdots&\vdots&\iddots&\vdots\\
      h_s&h_{s+1}&\cdots&h_{2s-1}
  \end{bmatrix}=
  \begin{bmatrix}
    CB & CAB & \cdots & CA^{s-1}B\\
    CAB & CA^2B &\cdots & CA^sB \\
    \vdots & \vdots & \iddots &\vdots\\
    CA^{s-1}B & CA^{s}B & \cdots & CA^{2s-2}B
  \end{bmatrix}.
\end{equation}
The Hankel matrix can be factored into the observability and controllability
matrix \(\oMat\in\R[ps][n]\) and \(\cMat\in\R[n][ms]\)
\begin{align*}
  \cH=\underbrace{\begin{bmatrix} C \\ \vdots \\ CA^{s-1} \end{bmatrix}}_{\eqqcolon \oMat}
  \underbrace{\begin{bmatrix} B & \cdots & A^{s-1}B \end{bmatrix}}_{\eqqcolon \cMat}\in\R[ps][ms].
\end{align*}
From this factorization, a realization can be constructed via
\begin{equation}
\renewcommand{\arraystretch}{1}
\label{eq:era-realization}
A=\oMat_{f}^{\dagger}\oMat_{l},\qquad B=\cMat\begin{bmatrix} I_{m} \\ 0 \end{bmatrix},\qquad C=\begin{bmatrix} I_p & 0 \end{bmatrix}\oMat,\qquad D=h_0,
\end{equation}
where $\oMat_{f}$ and $\oMat_{l}$ denote the first and last $p(s-1)$ rows of the observability matrix $\oMat\in\R[ps][n]$, respectively
\cite{kung1978,juang1985}. 

\sloppy In \ac{era}, the factorization is obtained from a (truncated) \ac{svd} \cite{kung1978,zeiger1974}. For a truncation order $r\leq\min\{ms,\,ps\}$, the truncated \ac{svd} is given by
\begin{equation}
\label{eq:tsvd}
  \cH=
  \begin{bmatrix}
    U_{r}&\ast
  \end{bmatrix}
  \begin{bmatrix}
    \bSigma_{r}&0\\0&\ast
  \end{bmatrix}
  \begin{bmatrix}
    V_{r}^{\top}\\\ast
  \end{bmatrix}\approx U_r\bSigma_rV_r^{\top},
\end{equation}
where \(U_{r}\in\R[ps][r]\) comprises the first \(r\) left singular vectors as columns, \(\bSigma_r=\diag{\sigma_1,\,\dots,\,\sigma_r}\) the first \(r\) singular values, and \(V_{r}\in\R[ms][r]\) the first \(r\) right singular vectors as columns. 
By choosing
\begin{equation}
  \label{eq:grams}
  \oMat=U_{r}\bSigma_{r}^{1/2}\quad\text{and}\quad\cMat=\bSigma_{r}^{1/2}V_{r}^{\top},
\end{equation}
as a factorization of the Hankel matrix, one obtains a reduced realization via \cref{eq:era-realization}. 

An important assumption for \ac{era} initially made in \cite{kung1978} is that the Markov parameters decay to zero, i.e.
\begin{equation}
    \label{eq:assumption-kung}
    h_{k}\rightarrow0\quad\text{for}\quad k>s.
\end{equation}
The idea behind it being the following consideration: An asymptotically stable system must have Markov parameters that decay to zero. Thus, the assumption assures that the underlying \ac{fom} is an asymptotically stable system and that the measurements of the Markov parameters contain all relevant dynamics of the system because they are (approximately) zero for any time index \(k>s\). Under these conditions, the \ac{rom} exhibits several desirable properties such as balancedness and stability \cite{akers1995,pernebo1982}, as well as abiding an a priori error bound \cite{kung1978}. The results are summarized in the following theorem:
\begin{theorem}[\cite{akers1995,kung1978,pernebo1982}]
\label{thm:era}
    If the Markov parameters satisfy \cref{eq:assumption-kung}, the realization given by \cref{eq:era-realization} and \cref{eq:grams} is (finitely) balanced, stable and satisfies
\begin{equation}
\label{eq:kung-bound}
\norm[\htwo]{\sys-\sys_r}=\biggl(\,
  \sum_{k=1}^{2s-1}\norm[\mathrm{F}]{\smash{h_k-C_{r}A_{r}^{k-1}B_{r}}}^{2}\biggr)^{1/2}\leq\sqrt{r+m+p}\cdot\sigma_{r+1}(\cH),
\end{equation}
where $\sigma_{r+1}(\cH)$ is the first neglected Hankel singular value.
\end{theorem}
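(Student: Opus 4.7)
The plan is to handle the three assertions---finite balancedness, asymptotic stability, and the \(\mathcal{H}_2\) error bound---separately, since each relies on a structurally different aspect of the SVD-based realization.

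\textbf{Finite balancedness.} I would compute the finite-horizon Gramians of the reduced realization directly from the factorization prescribed in \cref{eq:grams}. With \(\oMat_{r}=U_{r}\bSigma_{r}^{1/2}\) and \(\cMat_{r}=\bSigma_{r}^{1/2}V_{r}^{\top}\), orthonormality of the truncated singular bases (\(U_{r}^{\top}U_{r}=V_{r}^{\top}V_{r}=I_{r}\)) yields
\[
\oMat_{r}^{\top}\oMat_{r}=\bSigma_{r}^{1/2}(U_{r}^{\top}U_{r})\bSigma_{r}^{1/2}=\bSigma_{r}=\bSigma_{r}^{1/2}(V_{r}^{\top}V_{r})\bSigma_{r}^{1/2}=\cMat_{r}\cMat_{r}^{\top},
\]
so both truncated-horizon Gramians are equal and diagonal, which is exactly the content of finite balancedness.

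\textbf{Stability.} Under the decay hypothesis \cref{eq:assumption-kung}, the finite-horizon Gramians can be identified, up to a negligible tail, with the infinite-horizon ones. Then \(\bSigma_{r}\) satisfies the discrete-time Lyapunov equations \(A_{r}^{\top}\bSigma_{r}A_{r}-\bSigma_{r}=-C_{r}^{\top}C_{r}\) and \(A_{r}\bSigma_{r}A_{r}^{\top}-\bSigma_{r}=-B_{r}B_{r}^{\top}\). Positive definiteness of \(\bSigma_{r}\) (guaranteed as long as \(\sigma_{r}>0\)), together with the classical Lyapunov-based stability argument of \cite{pernebo1982,akers1995}, forces the spectral radius of \(A_{r}\) strictly below unity.

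\textbf{Error bound.} By Parseval's identity \cref{eq:parseval},
\[
\norm[\htwo]{\sys-\sys_{r}}^{2}=\sum_{k=0}^{\infty}\norm[\mathrm{F}]{h_{k}-C_{r}A_{r}^{k-1}B_{r}}^{2}.
\]
The \(k=0\) term vanishes since \(D_{r}=D=h_{0}\). The assumption \cref{eq:assumption-kung} truncates the tail of the original Markov parameters, and the finite balancedness/stability just established is used to control the residual contribution of the reduced Markov parameters, so that the effective sum runs over \(k=1,\dots,2s-1\). The individual error terms \(h_{k}-C_{r}A_{r}^{k-1}B_{r}\) can then be identified with the blocks of the matrix \(\cH-\oMat_{r}\cMat_{r}=\cH-U_{r}\bSigma_{r}V_{r}^{\top}\), which by Eckart--Young has Frobenius norm equal to \((\sum_{i>r}\sigma_{i}^{2})^{1/2}\).

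The main obstacle is establishing the precise prefactor \(\sqrt{r+m+p}\). A naive use of Eckart--Young combined with the block-Hankel multiplicity count only produces \(\sqrt{\min\{ps,ms\}-r}\cdot\sigma_{r+1}\), which is considerably weaker. Sharpening this to \(\sqrt{r+m+p}\) requires exploiting the rank structure of the reduced realization---essentially noting that the perturbation \(\cH-\cH_{r}\) can be split into a low-rank component of rank at most \(r+m+p\) when viewed through the shifted observability submatrices \(\oMat_{f}\), \(\oMat_{l}\) and the controllability block used in \cref{eq:era-realization}---and is exactly the step where the typographical correction announced in the highlights of \cite{kung1978} enters. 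I would therefore budget the bulk of the proof effort on this refined perturbation estimate, with the balancedness and stability arguments serving as preparatory lemmas.
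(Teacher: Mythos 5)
First, a framing point: the paper does not actually prove \cref{thm:era} --- it is imported wholesale from \cite{akers1995,kung1978,pernebo1982}, and the only original content surrounding it is the remark that the left-hand side of \cref{eq:kung-bound} must not be squared, traced to an inconsistent definition of the norm \(\norm[\mathrm{M}]{\cdot}\) in Kung's typewritten manuscript. So there is no in-paper proof to match your proposal against; it has to stand on its own. Your balancedness computation is correct and standard: orthonormality of \(U_r\) and \(V_r\) makes both finite-horizon Gramians equal to \(\bSigma_r\). The stability part is an acceptable sketch deferring to the Lyapunov arguments of \cite{pernebo1982,akers1995}, although note that under \cref{eq:assumption-kung} the matrix \(\bSigma_r\) satisfies Lyapunov \emph{inequalities} (up to the neglected tail), not the exact equations you wrote; the passage from \(A_r^{\top}\bSigma_rA_r\preceq\bSigma_r\) to a \emph{strict} spectral radius bound needs the additional minimality/observability argument, which you correctly attribute to the references.

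The error bound, however, has a genuine gap. The central identification --- that the terms \(h_k-C_rA_r^{k-1}B_r\) ``can be identified with the blocks of \(\cH-U_r\bSigma_rV_r^\top\)'' --- is false. The matrix \(U_r\bSigma_rV_r^\top=\oMat\cMat\) is not block-Hankel, and its \((i,j)\) block is not \(C_rA_r^{i+j-2}B_r\), because \(\oMat=U_r\bSigma_r^{1/2}\) is only \emph{approximately} the observability matrix of \((A_r,C_r)\): the shift relation \(\oMat_fA_r=\oMat_l\) holds only in the least-squares sense through the pseudoinverse in \cref{eq:era-realization}. This is precisely the distinction the paper itself draws in \cref{eq:triangle-ineq} between \(\norm[\mathrm{F}]{\smash{\cH-U_r\bSigma_rV_r^\top}}\) and \(\norm[\mathrm{F}]{\smash{\cH_r-U_r\bSigma_rV_r^\top}}\); your argument silently sets the second term to zero. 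Bounding \(\norm[\mathrm{F}]{\smash{h_k-C_rA_r^{k-1}B_r}}\) directly, Markov parameter by Markov parameter, by decomposing the error into the contribution of the truncated singular subspace and the contribution of the pseudoinverse step defining \(A_r\), is the entire substance of Kung's proof and is exactly where the three summands \(r\), \(m\), \(p\) under the square root originate; you explicitly defer this step, so the prefactor \(\sqrt{r+m+p}\) is asserted rather than established. Finally, the typographical correction is unrelated to this prefactor: it concerns only whether \(\norm[\mathrm{M}]{\cdot}\) equals \(\norm[\mathrm{F}]{\cdot}\) or \(\norm[\mathrm{F}]{\cdot}^2\), i.e.\ whether the left-hand side of \cref{eq:kung-bound} appears squared, and it enters at the very last line of Kung's argument, not in the perturbation estimate you propose to refine.
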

\begin{remark}
    In \cite{kramer2016,kramer2018}, \cref{thm:era} appears in a similar form, but bounding the squared \(\htwo\)-norm on the left-hand side of \cref{eq:kung-bound}. This different version of the error bound is used in the proofs of \cite[Theorem 3.4]{kramer2016} and \cite[Proposition 11]{kramer2018}. Upon investigation of the original work \cite{kung1978}, we believe that the form of the Theorem used in \cite{kramer2016,kramer2018} is flawed, and \cref{eq:kung-bound} is indeed correct. Being written on a typewriter, the original manuscript of Kung \cite{kung1978} contains several typing mistakes and inconsistencies. In fact, Kung introduces a matrix norm \(\norm[\mathrm{M}]{\cdot}=\norm[\mathrm{F}]{\cdot}^2\) in the lead-up to his theorem which is later redefined in the proof in the appendix as \(\norm[\mathrm{M}]{\cdot}=\norm[\mathrm{F}]{\cdot}\). Following along the proof in \cite{kung1978}, it becomes apparent that a power of two is missing in the earlier definition of the norm. Therefore, the above-mentioned derived bounds in \cite{kramer2016,kramer2018} need to be adjusted accordingly. As the outlines of the proofs in \cite{kramer2016,kramer2018} are unaffected by this modification, it can be done easily.
\end{remark}
\subsection{Discrete-time dead time systems}
\label{sec:dead-time}
Many dynamical systems exhibit dead times in their dynamical behaviour. In a \ac{siso} setting, this means that there exists some index \(T>0\) for which \(\by(t)=0\) for \(t<T\) for any \(\bu\in\ell_2\). In other words, it takes a minimal time \(T\) for any action on the input to be observable at the output. By \cref{eq:convolution}, this property is equivalent to \(h(t)=0\) for \(t<T\). The largest such index \(T\) is called the dead time of the system.

\sloppy For \ac{mimo} systems, the definition of dead time is more involved, see \cite{halevi1996, latawiec2000} and \cite[Ch. 11]{normey-rico2007} for details. We now introduce a discrete-time adaptation of the definition of dead time systems in \cite{halevi1996}. For input dead times \(\btau=[\tau_1~\cdots~\tau_m]\) and output dead times \(\btheta=[\theta_1~\cdots~\theta_p]\), let 
\begin{equation*}
\renewcommand{\arraystretch}{.9}
\bu_{\btau}(t)\!=\!\begin{bmatrix}u_1(t-\tau_1)&\cdots&u_m(t-\tau_m)\end{bmatrix}^\top\mkern-12mu,~
\by_{\btheta}(t)\!=\!\begin{bmatrix}y_1(t+\theta_1)&\cdots&y_p(t+\,\theta_p)\end{bmatrix}^\top
\end{equation*}
denote the delayed input and output signals, respectively. Analogously to \cref{eq:state}, the dynamics of a discrete-time dead time system are then characterized by
\begin{equation}
\label{eq:state-delay}
\begin{aligned}
    \bx(t+1)&=A_0\bx(t)+B_0\bu_{\btau}(t)\\
    \by_{\btheta}(t)&=C_0\bx(t)+D_0\bu_{\btau}(t),
\end{aligned}
\end{equation}
for \(A_0\in\R[n][n]\), \(B_0\in\R[n][m]\), \(C_0\in\R[p][n]\) and \(D_0\in\R[p][m]\). The system \(\sys_0=(A_0,B_0,C_0,D_0)\) models the dead time free dynamics of the system. In the following, we will always assume that \(\sys_0\) is minimal and free of dead time. Taking the \(\mathcal{Z}\)-transform of the above yields the transfer function
\begin{equation}
    \label{eq:tf-delay}
    \sys(z)=\Delta_{\btheta}(z)\sys_0(z)\Delta_{\btau}(z)=\Delta_{\btheta}(z)\bigl(C_0(zI_n-A_0)^{-1}B_0+D_0\bigr)\Delta_{\btau}(z),
\end{equation}
where \(\Delta_{\btau}(z)=\operatorname{diag}(z^{-\tau_1},\,\cdots,\,z^{-\tau_m})\) and \(\Delta_{\btheta}(z)=\operatorname{diag}(z^{-\theta_1},\,\cdots,\,z^{-\theta_p})\), reveals that dead times can be modelled multiplicatively in the frequency domain.

It should be noted that a dead time \(\delta\in\N\) manifests as a rational term \(z^{-\delta}\) in discrete-time, which means that the input and output dead time operators \(\Delta_{\btau}\) and \(\Delta_{\btheta}\) are rational functions of degree \(\bar{\tau}\coloneqq\sum_{i=1}^m\tau_i\) and \(\bar{\tau}\coloneqq\sum_{i=1}^p\theta_i\), respectively. In continuous-time, a dead time takes the form of an exponential term \(e^{\delta s}\) (with Laplace variable \(s\)) and, in consequence, \(\Delta_{\btau}\), \(\Delta_{\btheta}\) are both infinite dimensional. This is a key advantage of discrete-time models because the dead time operators can be directly realized as finite dimensional systems, whereas one has to approximate the exponential terms, e.g. by Pad\'{e} approximation \cite{partington2004}, in continuous-time.

The \ac{siso} pure dead time system with dead time \(0<\delta\in\N\) can be easily formulated in state-space by a canonical realization formula, e.g. controllable canonical form \cite{kailath1980} as
\begin{equation}
    \label{eq:ccf-delay}
    A_\delta=\!\begin{bmatrix}
        0 & 1 & 0 &  \phantom{\cdots}\\
        \vdots & \ddots & \ddots & 0\\
        \vdots &  & \ddots & 1\\
        0 & \cdots & \cdots & 0
    \end{bmatrix}\!\!\in\!\R[\delta][\delta],~
    B_\delta=\!\begin{bmatrix}
        0\\\vdots\\0\\1
    \end{bmatrix}\!\!\in\!\R[\delta][1],~
    C_\delta=\!\begin{bmatrix}
        1\\0\\\vdots\\0
    \end{bmatrix}^\top\mkern-12mu\!\!\in\!\R[1][\delta],~
    D_\delta=0.
\end{equation}

The \ac{mimo} input dead time operator \(\Delta_{\btau}=(A_{\btau},\,B_{\btau},\,C_{\btau},\,D_{\btau})\) can be written as a combined parallel system \cite{duke1986} of the \ac{siso} dead time system \cref{eq:ccf-delay} via
\begin{equation}
\label{eq:delta-realization}
\begin{aligned}
    A_{\btau}&=\blkdiag{A_{\tau_1},\,\cdots,\,A_{\tau_m}}\in\R[\bar{\tau}][\bar{\tau}],\\
    B_{\btau}&=\blkdiag{B_{\tau_1},\,\cdots,\,B_{\tau_m}}\in\R[\bar{\tau}][m],\\
    C_{\btau}&=\blkdiag{C_{\tau_1},\,\cdots,\,C_{\tau_m}}\in\R[m][\bar{\tau}],\\
    D_{\btau}&=\diag{D_{\tau_1},\,\cdots,\,D_{\tau_m}}\in\R[m][m],
\end{aligned}
\end{equation}
where \(D_{\tau_i}=1\) if \(\tau_i=0\). The output dead time operator \(\Delta_{\btheta}=(A_{\btheta},B_{\btheta},C_{\btheta},D_{\btheta})\) is realized analogously. It is easy to verify that
\begin{equation*}
    \begin{array}{c|c}
        A_{\delta}&B_{\delta}\\\hline C_{\delta}&D_{\delta}
    \end{array}^\hop
    \begin{array}{c|c}
        A_{\delta}&B_{\delta}\\\hline C_{\delta}&D_{\delta}
    \end{array}=I_{n+1},
\end{equation*}
i.e., all above-mentioned dead time systems are unitary. Specifically, it holds
\begin{equation*}
    \sys_{\delta}(e^{\imath\omega})^\hop\sys_{\delta}(e^{\imath\omega})=1
\end{equation*}
which reveals the allpass nature of a dead time transfer function. Furthermore, the Hankel singular values of a dead time are all equal to one. The reader is referred to \cite{gohberg1993} for a compilation of properties of unitary systems.
\section{Theory and calculation}
\label{sec:theory}
\subsection{Problem statement}
\label{sec:problem}
As mentioned earlier, the presence of dead times in the measurement data increases the order of the minimal realization and can drastically reduce the quality of reduced order approximations. Any system theoretic model reduction that exploits a decay of the spectrum of the Hankel operator must lead to large errors when applied to pure dead time systems, since their Hankel singular values are all equal to one. These errors manifest drastically in the time domain as so-called pre-ringing, a phase distortion also referred to as \textit{impulse smearing} \cite{adams2008,grantham2005}. In order to see exactly how an unstructured model \cref{eq:state} containing \ac{io} dead time is impacted, we expand the system concatenation \cref{eq:tf-delay}. According to \cite{duke1986}, and instating the notation of \cref{sec:dead-time}, the realization of the concatenated systems is given by
\begin{equation}
\label{eq:delay-realization}
    \begin{array}{c|c}
        A&B\\
        \hline
        C&D
    \end{array}
    \!\!=\!\!\begin{array}{c|c}
        \begin{matrix}
            A_{\btau}&0&0\\
            B_0C_{\btau}&A_0&0\\
            B_{\btheta}D_0C_{\btau}&B_{\btheta}C_0&A_{\btheta}
        \end{matrix}&
        \begin{matrix}
            B_{\btau}\\
            B_0D_{\btau}\\
            B_{\btheta}D_0D_{\btau}
        \end{matrix}\\
        \hline
        \begin{matrix}
            D_{\btheta}D_0C_{\btau}&D_{\btheta}C_0&C_{\btheta}
        \end{matrix}&
        D_{\btheta}D_0D_{\btau}
    \end{array}\!\!\in\!\R[(\bar{\tau}+n+\bar{\theta}+p)][(\bar{\tau}+n+\bar{\theta}+m)].
\end{equation}
Hence, the McMillan degree of the dead time system \((A,B,C,D)\) is \(n+\bar{\tau}+\bar{\theta}\), assuming that the interconnection does not result in a loss of controllability or observability \cite{schutter2000}. As all dead time related quantities \cref{eq:delta-realization} are sparse by definition, the realization in \cref{eq:delay-realization} possesses structure that enables efficient storage and computation. In a purely data-driven setting, however, it is challenging to obtain realizations of certain structures. In general, \ac{era} will produce reduced realizations that are (finitely) balanced \cite{akers1995} and therefore dense.
Yet, if the input and output dead times \(\btau\) and \(\btheta\) were known, the dead time operators \(\Delta_{\btau}\) and \(\Delta_{\btheta}\) could be readily constructed as in \cref{eq:delta-realization} and the dead-time-free system \(\sys_0\) could be computed with \ac{era} by removing the dead time from the measurement data \(h\), i.e. applying \ac{era} to dead-time-rectified measurement data \(h_0\in\ell_2^{p\times m}\) given by
\begin{equation}
\label{eq:truncation}
    h_0(t)=\begin{bmatrix}
        h_{11}(t+\tau_1+\theta_1)&\cdots&h_{1m}(t+\tau_m+\theta_1)\\
        \vdots&&\vdots\\
        h_{p1}(t+\tau_1+\theta_p)&\cdots&h_{pm}(t+\tau_m+\theta_p)
    \end{bmatrix}\in\R[p][m].
\end{equation}
The proposed approach not only preserves the dead time structure \cref{eq:delay-realization}, but also enables a dedicated reduction of solely the dynamical part of the system, thus minimizing pre-ringing errors. This naturally raises the question:
\begin{center}
\begin{tcolorbox}[enhanced,width=5in,center upper,
    fontupper=\normalsize,drop fuzzy shadow southeast,
    boxrule=.8pt,sharp corners,colframe=black
    %,colback=black!5
    ]
How can the dead times \(\btau\) and \(\btheta\) be determined from \(h\)?
\end{tcolorbox}
\end{center}
As we shall see, the answer is not straightforward and, in most cases, the dead times can only be approximated from the data. To better understand the challenge at hand, it is helpful to contrast the two different modelling approaches introduced earlier. A visualization of the different model structures is offered in \cref{fig:delay-model-comparison}. In the naive approach, the \ac{mimo} system is treated as an independent collection of \(p\cdot m\) \ac{siso} transmissions, each having their own dead time \(\delta_{ij}\), see \cref{fig:naive-delay}. As depicted in \cref{fig:ssm-delay}, the proposed \ac{mimo} state-space model of the form \cref{eq:state-delay} differs fundamentally in that all transmissions are modelled jointly and there are only \(m+p\) dead times present at the inputs and outputs.

Approximation techniques for the dead times \(\delta_{ij}\) of the parallel \ac{siso} model structure fall into the category of \ac{tde} to which a short overview will be given in the following.
\begin{figure}
    \centering
    \begin{subfigure}{.48\textwidth}
    \centering    
    \pgfmathsetmacro{\m}{3}
\pgfmathsetmacro{\p}{2}
\begin{tikzpicture}

\node at (-1.8,-1cm) {};
\node at (1.8+\bsize,\bsize+1.05cm) {};
\draw
\foreach \i in {1,...,\m} {
    node (in\i) at (-1.2cm,\pos{\i}{\m}) [draw,circle,label=left:\(u_{\i}\),inner sep=1.2pt] {}
};
\draw
\foreach \j in {1,...,\p} {
    node (out\j) at (1.2cm+\bsize,\pos{\j}{\p}) [draw,circle,label=right:\(y_{\j}\),inner sep=1.2pt] {}
};
\draw
\foreach \i in {1,...,\m} {
\foreach \j in {1,...,\p} {
node (d\i\j) at (\bsize/2,{.7cm*((\m-\i)*\p+(\p-\j+1)-(\m*\p/2)+1.3))}) [draw, circle, inner sep=1.2pt] {\footnotesize\(\delta_{\j\i}\)}
(in\i) -- (d\i\j.west) 
(d\i\j.east) -- (out\j)
}};
\end{tikzpicture}
    \caption{Model with independent \ac{siso} channels.}
    \label{fig:naive-delay}    
    \end{subfigure}%
    \begin{subfigure}{.48\textwidth}
    \centering
    \pgfmathsetmacro{\m}{3}
\pgfmathsetmacro{\p}{2}

\begin{tikzpicture}
\node at (-1.8,-1cm) {};
\node at (1.8+\bsize,\bsize+1.05cm) {};
\draw
node [myblock] (sys) at (\bsize/2,\bsize/2) {\Large\(\Sigma\)};
\draw
\foreach \i in {1,...,\m} {
    node (in\i) at (-1.2cm,\pos{\i}{\m}) [draw,circle,label=left:\(u_{\i}\),inner sep=1.2pt] {}
    node (din\i) at (-.5cm,\pos{\i}{\m}) [draw, circle, inner sep=2pt] {\footnotesize\(\tau_\i\)}
   (in\i) -- (din\i) -- ([yshift=(\pos{\i}{\m}-\bsize/2)]sys.west)
};
\draw
\foreach \j in {1,...,\p} {
    node (out\j) at (1.2cm+\bsize,\pos{\j}{\p}) [draw,circle,label=right:\(y_{\j}\),inner sep=1.2pt] {}
    node (dout\j) at (.5cm+\bsize,\pos{\j}{\p}) [draw, circle, inner sep=2pt] {\footnotesize\(\theta_\j\)}
    (out\j) -- (dout\j) -- ([yshift=(\pos{\j}{\p}-\bsize/2)]sys.east)
};
\end{tikzpicture} 
    \caption{\ac{mimo} state-space model.}    
    \label{fig:ssm-delay}    
    \end{subfigure}
    \caption{Schematic representation of different \ac{io} dead time \ac{lti} model structures for a \ac{mimo} system with four inputs \(u_1,u_2,u_3,u_4\) and three outputs \(y_1,y_2,y_3\).\label{fig:delay-model-comparison}}
\end{figure}
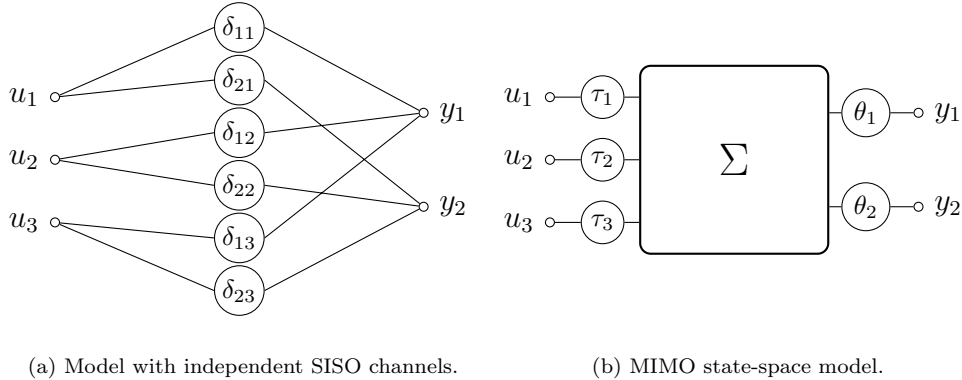
\subsection{Time delay estimation}
\label{sec:tde}
Given measurements of finite duration input and output trajectories \(\bu\in\ell_2^m\) and \(\by\in\ell_2^p\), the \ac{tde} problem consists of estimating the dead time (or delay) denoted by \(\delta_{ij}\) between all inputs and outputs. It is common to arrange all dead times in a so-called delay matrix
\begin{equation}
    \label{eq:delay-matrix}
    \bdelta=\begin{bmatrix}
        \delta_{11}&\cdots&\delta_{1m}\\
        \vdots&&\vdots\\
        \delta_{p1}&\cdots&\delta_{pm}
    \end{bmatrix}
\end{equation}
 which is the target of estimation. There exists a plethora of solution approaches, e.g., using generalized cross correlation in the time domain \cite{knapp1976} and in the time-frequency domain leveraging wavelets \cite{ni2010,tabaru2002,barsanti2003}; the Hilbert transform \cite{tamim2009,selvanathan2010}; or using higher order statistical moments \cite{usher2010}. Recently, more sophisticated approaches appeared based on copulae \cite{wang2020} and graph global smoothness \cite{wang2023}. The reader is referred to \cite{bjorklund2003} for a comprehensive survey of classical methods and \cite{defrance2008} for a comparative study of methods specifically considering \ac{rir} data. Since we consider \ac{ir} data, many of these approaches become simpler because we can set \(\by=h\) and choose an impulse input \(\bu\).

 \subsection{Dead time splitting}
 \label{sec:dts}
The problem setting considered in this work is more specific than the \ac{tde} problem because we are looking to determine \(\btau\) and \(\btheta\) rather than \(\bdelta\). Imposing the delay structure \cref{eq:state-delay} implies that \(\bdelta=\btheta\btau^\top\) which is equivalent to the following linear system of equations
\begin{equation}
 \label{eq:dtspre-problem}
    \begin{bmatrix}
    \boldone_{m}&\cdots&0&I_{m}\\
    \vdots&\ddots&\vdots&\vdots\\
    0&\cdots&\boldone_{m}&I_{m}
    \end{bmatrix}\begin{bmatrix}
        \btheta^\top\\\btau^\top
    \end{bmatrix}=
    F\bx
    =\vec{\bdelta},
\end{equation}
 where \(\boldone_n\in\R[n]\) denotes the vector of ones and \(\vec{\bdelta}=[
     \delta_{11}~\delta_{21}~\cdots~\delta_{(p-1)m}~\delta_{pm}]^\top\in\R[pm]\).
     
Similar formulations can be found in \cite{ni2010,haggblom2012}. However, they only consider the case where \(m=p=2\). This is a special case because it is the only \ac{mimo} configuration that always has a unique solution. In all other cases, \(m+p<mp\) and \cref{eq:dtspre-problem} will be overdetermined. 
Generally, it cannot be guaranteed that \cref{eq:dtspre-problem} is a consistent system of equations. Furthermore, in a data-driven setting, the structure of the latent dynamical system might not even be of the form \cref{eq:state-delay}. Therefore, it does not suffice to solve \cref{eq:dtspre-problem} by least-squares as proposed in \cite{ni2010} or by finding a rank-one approximation of \(\bdelta\).

In order to ensure that relevant dynamics are not pruned when performing the truncation step \cref{eq:truncation}, we impose the additional constraints \(\theta_i+\tau_j\leq\delta_{ij}\). This leads to the following linear program
\begin{equation}
    \label{eq:dts-problem}
\argmax_{\bx\geq0}\norm[1]{\bx}\qquad\text{subject to}\qquad F\bx\leq \vec{\bdelta},
\end{equation}
with \(F\in\R[pm][(p+m)]\) and \(\bx\in\R[p+m]\) as in \cref{eq:dtspre-problem}. We will refer to \cref{eq:dts-problem} as the \ac{dts} problem from here on out. A geometric depiction of the dead time splitting problem can be found in \cref{fig:dts} for an acoustical toy example. In \cref{fig:dts-hand}, the dead time splitting is done by hand arbitrarily. \cref{fig:dts-linprog} shows the solution of \cref{eq:dts-problem}. As can be seen, the dead times cannot be extracted entirely and some residual dead times remain. The sum of the residual dead times \(\norm[1]{F\bx-\vec{\bdelta}}\) is minimized by \cref{eq:dts-problem}.

\begin{figure}
    \centering
    \begin{subfigure}{0.48\textwidth}
    \scalebox{0.85}{\begin{tikzpicture}
\useasboundingbox (-3,-1.4) rectangle (4.35,2.4);

\begin{pgfonlayer}{foreground}
\coordinate[label=left:\(s_1\), draw, circle, fill=white, inner sep=1pt] (s1) at (-2.4,2);
\coordinate[label=left:\(s_2\), draw, circle, fill=white, inner sep=1pt] (s2) at (-0.8,0.6);
\coordinate[label=left:\(s_3\), draw, circle, fill=white, inner sep=1pt] (s3) at (-1.4,-1.2);
\coordinate[label=right:\(r_1\), draw, circle, fill=white, inner sep=1pt] (r1) at (3.8,2);
\coordinate[label=below:\(r_2\), draw, circle, fill=white, inner sep=1pt] (r2) at (2.5,.1);
\end{pgfonlayer}

%\draw let \p1=($(s1)-(r1)$), \n1={veclen(\x1,\y1)-149.53pt} in node at (0,0) {\n1};

% SOURCES
\foreach \i/\r in {1/149.53pt, 2/94.95pt, 3/116.97pt}{
    \path[name path=c] (s\i) circle (\r);
    \path[name path=l1] (s\i) -- (r1);
    \path[name path=lm] (s\i) -- ($(r1)!0.5!(r2)$);
    \path[name path=l2] (s\i) -- (r2);
    \path[name intersections = {of = l1 and c, by = sp1\i}];
    \path[name intersections = {of = lm and c, by = spm\i}];
    \path[name intersections = {of = l2 and c, by = sp2\i}];
    \draw[thick, dotted, index of colormap={1 of Set1}] (s\i) -- (sp1\i) to[arc through={clockwise,(spm\i)}] (r2) -- (s\i);
}

%RECEIVERS
\foreach \i/\r in {1/26.88pt}{
    \path[name path=c] (r\i) circle (\r);
    \path[name path=l1] (r\i) -- (s1);
    \path[name path=lm] (r\i) -- (s2);
    \path[name path=l2] (r\i) -- (s3);
    \path[name intersections = {of = l1 and c, by = rp1\i}];
    \path[name intersections = {of = lm and c, by = rpm\i}];
    \path[name intersections = {of = l2 and c, by = rp2\i}];
    \draw[thick, dotted, index of colormap={2 of Set1}] (r\i) -- (rp1\i) to[arc through={counterclockwise,(rpm\i)}] (rp2\i) (rpm\i) --(r\i);
}
\draw[thick,index of colormap={1 of Set1}] (s1) -- node[above] {\(\tau_1\)} (sp11);
\draw[thick,index of colormap={1 of Set1}] (s2) -- node[above] {\(\tau_2\)} (sp12);
\draw[thick,index of colormap={1 of Set1}] (s3) -- node[above, near start] {\(\tau_3\)} (sp13);
\draw[thick,index of colormap={2 of Set1}] (r1) -- node[below, near start] {\(\theta_1\)} (rp21);
\node[right, index of colormap={2 of Set1}] at (r2) {\(\theta_2=0\)};
\draw[thick,index of colormap={4 of Set1}] (sp12) -- (rpm1);
\draw[thick,index of colormap={0 of Set1}] (sp13) -- (rp21);

\end{tikzpicture}}
    \caption{Arbitrary splitting.\label{fig:dts-hand}}
    \end{subfigure}%
    \hfill
    \begin{subfigure}{0.48\textwidth}
    \scalebox{0.85}{\begin{tikzpicture}
\useasboundingbox (-3,-1.4) rectangle (4.35,2.4);

\begin{pgfonlayer}{foreground}
\coordinate[label=left:\(s_1\), draw, circle, fill=white, inner sep=1pt] (s1) at (-2.4,2);
\coordinate[label=left:\(s_2\), draw, circle, fill=white, inner sep=1pt] (s2) at (-0.8,0.6);
\coordinate[label=left:\(s_3\), draw, circle, fill=white, inner sep=1pt] (s3) at (-1.4,-1.2);
\coordinate[label=right:\(r_1\), draw, circle, fill=white, inner sep=1pt] (r1) at (3.8,2);
\coordinate[label=right:\(r_2\), draw, circle, fill=white, inner sep=1pt] (r2) at (2.5,.1);
\end{pgfonlayer}

%\draw let \p1=($(s1)-(r1)$), \n1={veclen(\x1,\y1)-149.53pt} in node at (0,0) {\n1};

% SOURCES
\foreach \i/\r in {1/1.39, 3/0.77}{
    \path[name path=c] (s\i) circle (\r);
    \path[name path=l1] (s\i) -- (r1);
    \path[name path=lm] (s\i) -- ($(r1)!0.5!(r2)$);
    \path[name path=l2] (s\i) -- (r2);
    \path[name intersections = {of = l1 and c, by = sp1\i}];
    \path[name intersections = {of = lm and c, by = spm\i}];
    \path[name intersections = {of = l2 and c, by = sp2\i}];
    \draw[thick, dotted, index of colormap={1 of Set1}] (s\i) -- (sp1\i) to[arc through={clockwise,(spm\i)}] (sp2\i) -- (s\i);
}

%RECEIVERS
\foreach \i/\r in {1/4.8, 2/3.34}{
    \path[name path=c] (r\i) circle (\r);
    \path[name path=l1] (r\i) -- (s1);
    \path[name path=lm] (r\i) -- ($(s1)!0.5!(s3)$);
    \path[name path=l2] (r\i) -- (s3);
    \path[name intersections = {of = l1 and c, by = rp1\i}];
    \path[name intersections = {of = lm and c, by = rpm\i}];
    \path[name intersections = {of = l2 and c, by = rp2\i}];
    \draw[thick, dotted, index of colormap={2 of Set1}] (r\i) -- (rp1\i) to[arc through={counterclockwise,(rpm\i)}] (rp2\i) --(r\i);
}

\draw[thick,index of colormap={1 of Set1}] (s1) -- node[below] {\(\tau_1\)} (sp21);
\node[below left,index of colormap={1 of Set1}, xshift=-.6cm] at (s2) {\(\tau_2=0\)};
\draw[thick,index of colormap={1 of Set1}] (s3) -- node[above] {\(\tau_3\)} (sp13);
\draw[thick,index of colormap={2 of Set1}] (r1) -- node[below, near start] {\(\theta_1\)} (rp21);
\draw[thick,index of colormap={2 of Set1}] (r2) -- node[above, near end] {\(\theta_2\)} (rp12);
\draw[dotted, thick, index of colormap={2 of Set1}] (r1) -- (s2) -- (r2);
\draw[thick,index of colormap={0 of Set1}] (sp21) -- (rp12);
\draw[thick,index of colormap={4 of Set1}] (sp13) -- (rp21);

\end{tikzpicture}}
    \caption{Solution to \cref{eq:dts-problem}.\label{fig:dts-linprog}} 
    \end{subfigure}
    \caption{The dead time splitting problem for a simple acoustic free field transmission in two dimensions with three sources \(s_1\), \(s_2\), \(s_3\) and two receivers \(r_1\), \(r_2\). The input dead times \(\tau_i\) are shown in blue and the output dead times \(\theta_i\) in green. The residual dead times are indicated by the orange and red lines, respectively.\label{fig:dts}} 
\end{figure}
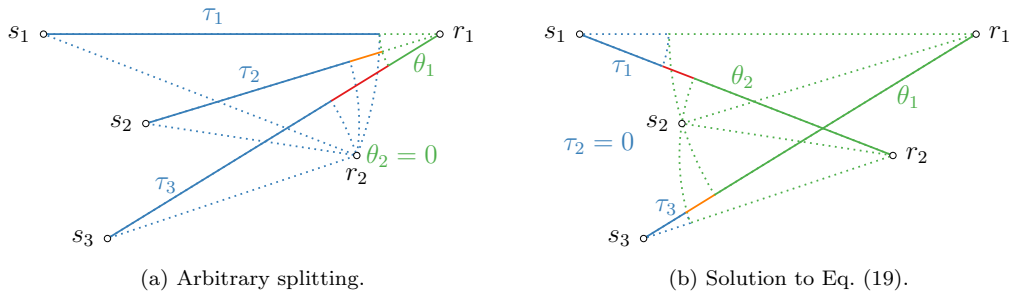

\subsection{Numerical implementation}
\label{sec:numerical-implementation}
In light of the considerations above, we derive a numerical procedure as follows:
Given \ac{ir} measurement data and the (estimated) dead times between each source and receiver,
\begin{enumerate}
    \item solve the \ac{dts} problem \cref{eq:dts-problem},
    \item remove the computed dead times by shifting the data accordingly,
    \item construct a \ac{rom} from the dead-time-rectified data with \ac{era},
    \item reapply the dead times to enforce a structured reduced order model \cref{eq:delay-realization}.
\end{enumerate}
Since \(F\) is sparse, \cref{eq:dts-problem} is straightforward to solve with an interior point method \cite{schork2020}. Therefore, the main numerical challenge is posed by the computation of the \ac{svd} of the Hankel matrix \cref{eq:tsvd} because it can become very large for real-world acoustical measurement data. As was demonstrated previously in \cite{pelling2021,kujawski2024} and according to the listed matrix sizes in \cref{tab:benchmarks}, a direct factorization becomes infeasible quickly and the Hankel matrix \(cH\) might not even fit into memory. To overcome these problems, the \ac{svd} can be substituted by an approximate matrix-free factorization. This concept was initially proposed in \cite{kramer2018} by employing CUR decompositions. Later, the use of \ac{rsvd} for \ac{era} has been investigated in \cite{minster2021}. Recently, tensor-based decompositions such as the Tucker decomposition were considered in \cite[Sec. 6.1.]{kilmer2022}.

In this work, we employ randomized \ac{era} from \cite{minster2021} since it performed remarkably well for the considered benchmarks in the past \cite{pelling2021,kujawski2024}. In the standard formulation of \ac{era}-type algorithms, the desired model order has to be chosen a priori. From an application standpoint, this is not ideal because the computations need to be repeated from scratch whenever the reduced order model does not possess the desired accuracy. As demonstrated in the following section, even with randomization techniques, the computational effort can be substantial for large datasets. Therefore, we reformulate randomized \ac{era} as an adaptive pipeline which can improve the approximation quality by drawing additional samples of the Hankel operator and reuses calculations of previous iterations with a QR-updating scheme.
Additionally, we equip randomized \ac{era} with a \ac{loo} error estimator \cite{epperly2024} with which the approximation quality of the \ac{rsvd} can be assessed cheaply.

For sake of brevity, the proposed adaptive randomized \ac{era} reformulation is summarized as a high-level routine in \cref{algo:randera}. The details of the numerical implementation and all subroutines, in particular the proposed communication-avoiding QR routine \cite{fukaya2020} and \ac{loo} error estimator \cite{epperly2024}, can be found in \cref{sec:algos}.
\begin{algorithm}
\caption{Adaptive randomized \ac{era} based on \cite{minster2021}. \label{algo:randera}}
    \begin{algorithmic}[1]
        %\label[randera]{randera}
        \Require{Sequence of Markov parameters \(h=\{h_k\}_{k=0}^{2s-1}\) with \(h_k\in\R[p][m]\) that fulfill \cref{eq:assumption-kung}, approximation tolerance \(\gamma>0\), blocksize \(b\in\N\), number of power iterations \(q\in\N_0\).}
        \Ensure{Realization \(\sys_r=(A_r,B_r,C_r,D_r)\) satisfying \cref{eq:err-est}}
        \State \(\cH\gets \{h_k\}_{k=1}^{2s}\) \Comment{Implicit representation by \cite[Algorithm 3.1]{minster2021}.}
        \State \(\etol\gets\gamma\norm[\htwo,\eta]{G}\) \Comment{Scale the tolerance (see \cref{sec:err-est})
        .}
        \State \(U,\bSigma,V\gets\textsc{RandSVD}(\cH,b,q,\etol)\) \Comment{\cref{algo:randsvd}}
        \State \(U_f\gets U(p+1:ps,\,:)\)
        \State \(U_l\gets U(1:p(s-1),\,:)\)
        \State \(A\gets \bSigma^{-1/2}U_f^\dagger U_l\bSigma^{1/2}\)
        \State \(B\gets \bSigma^{1/2}V(1:m,\,:)^\top\)
        \State \(C\gets U(1:p,\,:)\bSigma^{1/2}\)
        \State \(D\gets h_0\)
    \end{algorithmic}
\end{algorithm}
\subsection{Proposed error estimator}
\label{sec:err-est}
The adaptive formulation of randomized \ac{era} in \cref{algo:randera} identifies a \ac{rom} based on the  \ac{rsvd} tolerance parameter \(\etol\) of \cref{algo:randsvd}. Since the \ac{loo} error estimator \(\eloo\) from \cite{epperly2024} (see \cref{algo:looest}) estimates the approximation error of the \ac{rsvd} of the Hankel matrix in the sense that
\begin{equation}
\label{eq:eloo-expec}
    \mathbb{E}\bigl(\norm[\mathrm{F}]{\smash{\cH-U_r\bSigma_rV_r^\top}}\bigr)=\eloo,
\end{equation}
where \(\mathbb{E}(\,\cdot\,)\) denotes the expected value, and according to \cref{algo:randsvd} we have \(\eloo<\etol\), it remains to relate \(\eloo\) to the \(\cH_2\)-error of the \ac{rom}. We can achieve this by following considerations made in \cite{kilmer2022}. To this end, let
\begin{equation*}
    \eta_k=\begin{cases}
        1,&k=0,\\
        k,&1\leq k\leq s,\\
        2s-k,&s<k<2s,
    \end{cases}
\end{equation*}
denote the absolute frequency of elements of the (block-)Hankel matrix. With \cref{eq:parseval}, let
\begin{equation}
\label{eq:weighted-h2-norm}
\norm[\htwo,\eta]{\sys}=\biggl(\,\sum_{k=0}^\infty\eta_k\norm[\mathrm{F}]{h_k}^2\biggr)^{1/2}
\end{equation}
denote the time-weighted \(\htwo\)-norm with weights \(\eta_k\). Since \(1<\eta_k\leq s\) for all \(k\), it holds
\begin{equation*}
    \frac{1}{\sqrt{s}}\norm[\htwo,\eta]{\sys}=\!\biggl(\,\sum_{k=0}^\infty\frac{\eta_k}{s}\norm[\mathrm{F}]{h_k}^2\biggr)^{1/2}\!\!\leq\norm[\cH_2]{\sys}\leq\!\biggl(\,\sum_{k=0}^\infty\eta_k\norm[\mathrm{F}]{h_k}^2\biggr)^{1/2}\!\!=\norm[\htwo,\eta]{\sys},
\end{equation*}
which means that both norms are equivalent. By \cref{eq:hankel-matrix,eq:weighted-h2-norm} it holds
\begin{equation*}
\norm[\htwo,\eta]{\sys}=\bigl(\norm[\mathrm{F}]{\cH}^2+\norm[\mathrm{F}]{h_0}^2\bigr)^{1/2}.
\end{equation*}
Since, by definition, \(h_0=D=D_r\), the approximation error can be bounded by
\begin{equation}
\label{eq:triangle-ineq}
\begin{aligned}
\norm[\htwo]{G-G_r}\leq\norm[\htwo,\eta]{\sys-\sys_r}&=\norm[\mathrm{F}]{\cH-\cH_r}\\&\leq\norm[\mathrm{F}]{\smash{\cH-U_r\bSigma_rV_r^\top}}+\norm[\mathrm{F}]{\smash{\cH_r-U_r\bSigma_rV_r^\top}},
\end{aligned}
\end{equation}
using the triangle inequality for the last part.

Ideally, we would like to obtain an efficient error estimator that is fast to evaluate. To achieve this, we need to simplify \cref{eq:triangle-ineq} significantly. Firstly, we drop \(\norm[\mathrm{F}]{\smash{\cH_r-U_r\bSigma_rV_r^\top}}\), since it requires a construction of \(\sys_r\), an \ac{ir} computation, and the construction of \(\cH_r\). Each individual operation alone is numerically infeasible during error estimation\footnote{Note that this simplification is reasonable, since the approximation \cref{eq:triangle-ineq} is conservative and, as will be demonstrated in \cref{sec:error-bounds-and-estimators}, the approximation error follows the \ac{rsvd}-error \(\norm[\mathrm{F}]{\smash{\cH-U_r\bSigma_rV_r^\top}}\) asymptotically in practice.}. Leveraging \cref{eq:eloo-expec,eq:triangle-ineq}, yields the following approximation of the expectation of the relative error:
\begin{equation}
\label{eq:eloo-approx}
\mathbb{E}\biggl(\,\frac{\norm[\htwo]{\sys-\sys_r}}{\norm[\htwo]{\sys}}\biggr)=\frac{\mathbb{E}(\norm[\htwo]{\sys-\sys_r})}{\norm[\htwo]{\sys}}\leq\frac{\eloo}{\norm[\htwo]{\sys}}.
\end{equation}
Observe that for \(r\rightarrow0\), \(\eloo\rightarrow\norm[\mathrm{F}]{\cH}=\norm[\htwo,\eta]{\sys}\). By assuming that the variance of \(\erel\) is negligible and applying a normalization to the right-hand side of \cref{eq:eloo-approx}, we obtain a rough estimator for the relative error:
\begin{equation}
    \label{eq:err-est}
    \frac{\norm[\htwo]{\sys-\sys_r}}{\norm[\htwo]{\sys}}\approx\frac{\eloo}{\norm[\htwo,\eta]{\sys}}.
\end{equation}
As desired, it holds
\begin{equation*}
\lim_{r\rightarrow0}\frac{\eloo}{\norm[\htwo,\eta]{\sys}}=1
\end{equation*}
and the estimator does not rely on any computations involving \(\sys_r\). Further, \(\norm[\htwo,\eta]{\sys}\) needs to be computed only once directly from the data. In conclusion, any desired tolerance \(\gamma>0\) for the relative error of the reduced order model can be prescribed by simply setting \(\etol=\gamma\norm[\htwo,\eta]{\sys}\) when computing the \ac{rsvd}.
\section{Benchmarks}
\label{sec:benchmarks}
To validate the proposed dead time extraction scheme as well as the adaptive formulation of randomized \ac{era}, our method is applied to several benchmark datasets that are briefly introduced below. Numerical results are reported and discussed in \cref{sec:results}.

All numerical simulations were performed on a machine with two 12-core Intel\regtm{} Xeon\regtm{} Silver 4214R CPUs with 256 GB RAM running on Ubuntu Linux 22.04.5 LTS. \cref{algo:randera} was implemented in the Python programming language (version 3.12.7) using the open-source \ac{mor} library pyMOR \cite{milk2016,balicki2019} (version 2024.2) for its implementations of adaptive \ac{rsvd} and the \ac{loo} error estimator on top of which randomized \ac{era} was implemented on a development branch. We intend to include the proposed adaptive randomized \ac{era} algorithm in the upcoming pyMOR release (version 2025.1). Additionally, a custom routine for efficient matrix-vector products with block-Hankel matrices was implemented that is accelerated by just-in-time compilation with the Python compiler Numba \cite{lam2015} (version 0.61.2). Due to the size of the benchmarks, it was necessary to use an implementation of \ac{blas} with \(64\)-bit integer (ILP64) support to enable the computation of the larger \acp{rom}. Further, all computations are done in single precision (\(32\)-bit floating point) to reduce memory usage.
\begin{center}%
  \setlength{\fboxsep}{5pt}%
  \fbox{%
  \begin{minipage}{.92\linewidth}
    \textbf{Source code availability}\newline
    The source code and scripts used to compute the results presented in this
    paper can be obtained from
    \begin{center}
      \href{https://doi.org/10.5281/zenodo.15586170}%
        {\texttt{10.5281/zenodo.15586170}}
    \end{center}
    under the MIT licence and authored by Art J. R. Pelling.
  \end{minipage}}
\end{center}
We now briefly introduce the \ac{ir} datasets that will serve as benchmark problems for our method.
\subsection{Microphone Array Impulse Response Database for Acoustic Learning}
The \ac{miracle} \cite{kujawski2024} consists of four acoustic scenarios of \acp{rir} that we measured in the anechoic chamber at TU Berlin. Each scenario comprises spatially distributed \acp{ir} that were measured with a planar microphone array of \(64\) microphones. The sources were arranged on square equidistant grids parallel to the microphone array plane, with a grid size of \(23\mm\) and \(5\mm\) depending on the scenario. The dataset includes scenarios at source-plane-receiver-plane distances of \(73.4\m\) and \(146.7\m\), respectively. The arrangement of the measurement setup is depicted in \cref{fig:miracle}. 

\ac{miracle} is specifically designed to serve as a large-scale benchmark problem for reduced-order modelling algorithms, among others. A short application example with \ac{era} is given in \cite[Sec. 4.2.2]{kujawski2024} as a proof of concept. Here, we will analyse the performance of \ac{era} more thoroughly and improve upon the results reported previously. 

Running \ac{era} for the larger scenarios in the \ac{miracle} dataset; namely A1, A2, and R2; exhaust the memory of our machine for \acp{rom} with an order of \(2000\) and above. To avoid out-of-core computations, we reduce the size of the dataset by sub-selecting a coarser grid of source positions. The size of the reduced scenarios is only a quarter of the original scenarios and are obtained by omitting every other source position in both the \(x\)- and \(y\)-direction. These coarser scenarios are denoted by a “-C1” suffix, i.e. A1-C1, A2-C1, and R2-C1. Even with this reduction, the coarser benchmark cases can be considered rather large, as \cref{tab:benchmarks} reveals.
\begin{figure}
    \centering
    \includegraphics[width=.8\textwidth]{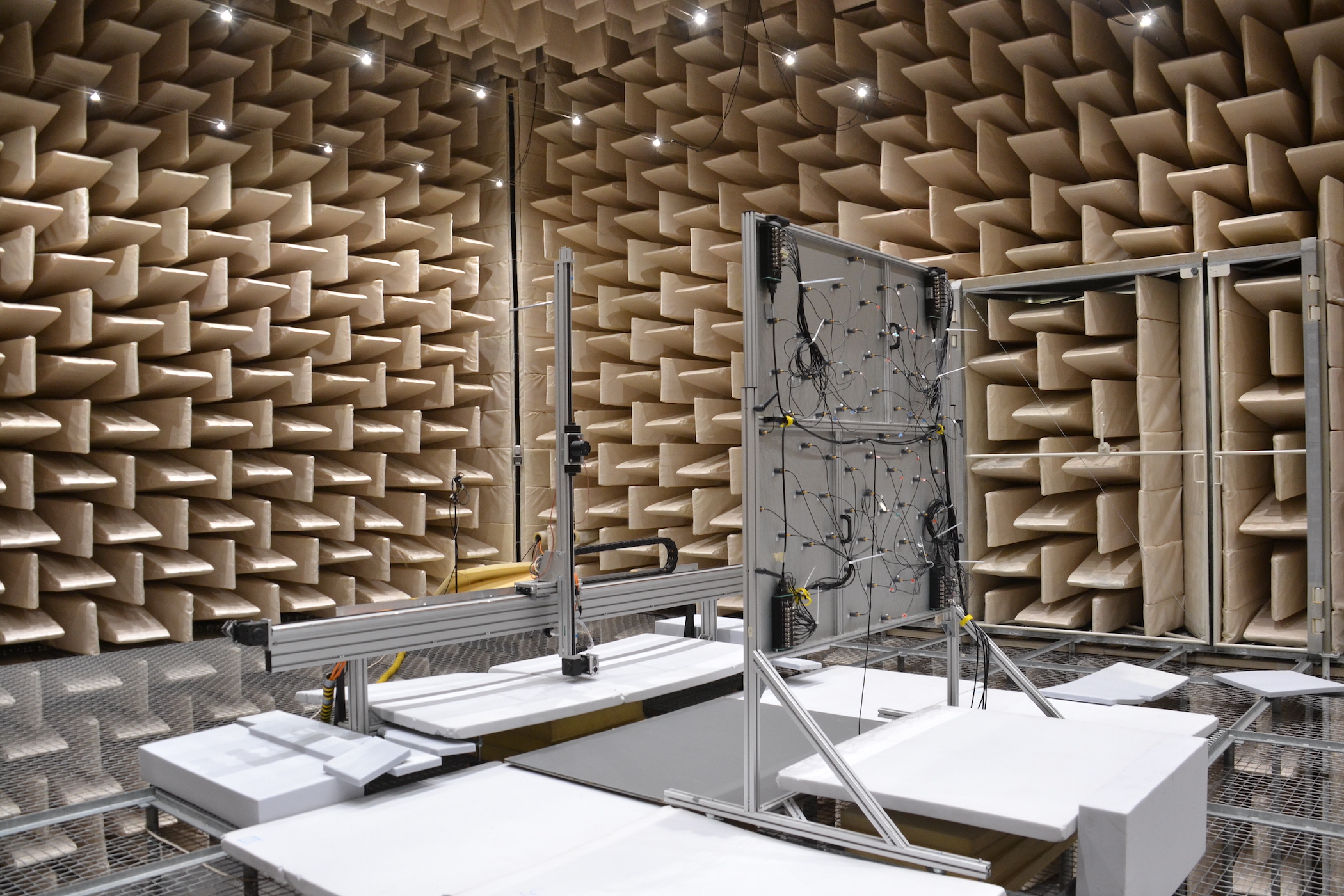}
    \caption{Geometrical setup of the \ac{miracle} scenarios. The sources (positionable loudspeaker) are distributed on a planar grid parallel to a planar microphone array. The image with annotations removed is taken from \cite{kujawski2024} under the \href{https://creativecommons.org/licenses/by/4.0/}{CC-BY 4.0 licence}. No further changes were made.}
    \label{fig:miracle}
\end{figure}
\subsection{Multi-Channel Impulse Response Database}
Furthermore, we revisit the \ac{mird} \cite{hadad2014}, which was used as a demonstrator for reduced order modelling in an earlier work by the authors \cite{pelling2021}, and recently also considered in \cite{hilgemann2023}. The \ac{mird} contains \ac{rir} measurements of a square room (\(6\m\times6\m\times2.4\m\)) with configurable absorber panels to control the reverberation time \RT. The \acp{rir} were measured with a linear microphone array consisting of \(8\) microphones for \(26\) source positions that lie on two semicircles around the microphone array. The source positions are distributed equiangularly in \(15^{\circ}\) steps, with the radii of the semicircles being \(1\m\) and \(2\m\), respectively. The database contains several acoustical scenarios with three different reverberation times \(\RT\in\{0.16\seconds,\,0.36\seconds,\,0.61\seconds\}\) and varying microphone spacings of the microphone array. We consider the scenarios with inter-microphone spacings of \(3\cm\) which we denote as SHORT3, MID3, and LONG3, in reference to the different reverberation times \RT. With a duration of \(10\seconds\), i.e. \(480000\) samples, the supplied \acp{rir} in are overly long compared to the reverberation times of the scenarios. Furthermore, the \acp{rir} contain additional dead time of \(629\) samples that is not explained by source-receiver distances but rather by a processing delay in the measurement setup. To avoid unnecessary computations, the \acp{rir} are truncated by removing the unphysical dead time at the start and adjusting the overall length  to the respective reverberation times before applying our method. The number of samples in the \acp{rir} of each considered scenario can be found in \cref{tab:benchmarks}.
\begin{figure}
    \centering
    \includegraphics[width=.8\textwidth]{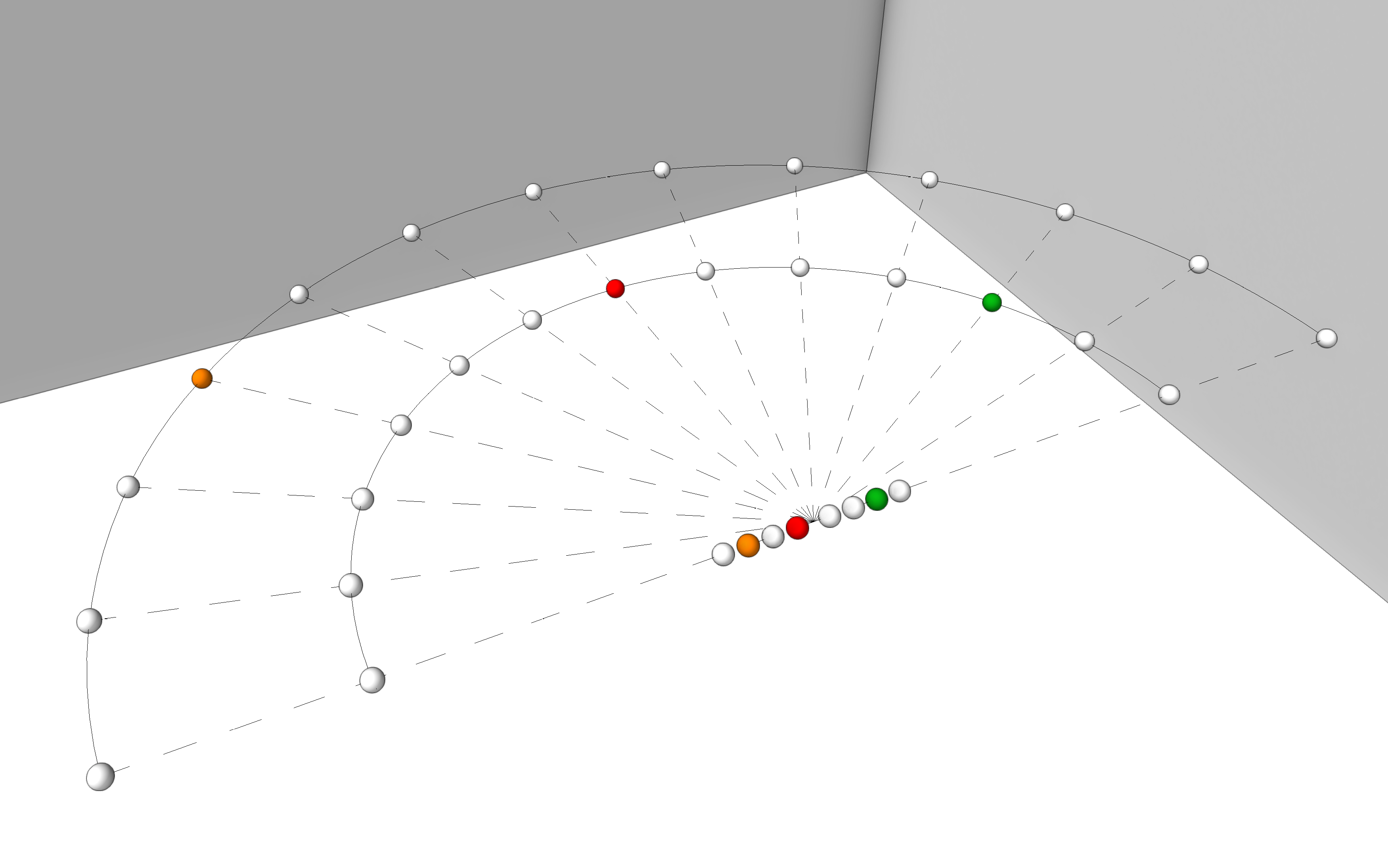}
    \caption{Geometrical setup of the \ac{mird} scenarios. The \(26\) sources are equiangularly distributed on two concentric semicircles with a linear array of eight microphones at the centre. The image is taken from \cite{pelling2021} under the \href{https://creativecommons.org/licenses/by/4.0/}{CC-BY 4.0 licence}. No changes were made.}
    \label{fig:mird}
\end{figure}
\begin{table}
\centering
\begin{tabular}{l|l|c|c|c|c|c}
     Dataset & Scenario &\(m\) & \(p\) & \(s\) & \(f_s\) & \(\operatorname{dim}(\cH)\)\\
     \hline
     \acs{miracle} & D1 & \(1089\) & \(64\) & \(1024\) & \(32\kHz\) & \(65536\times1115136\)\\     
     \acs{miracle} & A1-C1 & \(1024\) & \(64\) & \(1024\) & \(32\kHz\) & \(65536\times1048576\)\\     
     \acs{miracle} & A2-C1 & \(1024\) & \(64\) & \(1024\) & \(32\kHz\) & \(65536\times1048576\)\\   
     \acs{miracle} & R2-C1 & \(1024\) & \(64\) & \(1024\) & \(32\kHz\) & \(65536\times1048576\)\\      
%     \acs{miracle} & A1 & \(4096\) & \(64\) & \(1024\) & \(32\kHz\) & \(65536\times4194304\)\\     
%     \acs{miracle} & A2 & \(4096\) & \(64\) & \(1024\) & \(32\kHz\) & \(65536\times4194304\)\\   
%     \acs{miracle} & R2 & \(4096\) & \(64\) & \(1024\) & \(32\kHz\) & \(65536\times4194304\)\\        
     \acs{mird} & SHORT3 & \(26\) & \(8\) & \(7680\) & \(48\kHz\) & \(61440\times199680\)\\
     \acs{mird} & MID3 & \(26\) & \(8\) & \(17280\) & \(48\kHz\) & \(138240\times449280\)\\
     \acs{mird} & LONG3 & \(26\) & \(8\) & \(29280\) & \(48\kHz\) & \(234240\times761280\)\\
\end{tabular}
\caption{Properties for the considered benchmark scenarios. \(m\): number of sources, \(p\): number of receivers, \(s\): number of samples per \ac{ir}, \(f_s\): sampling frequency, \(\operatorname{dim}(\cH)\): dimension of the Hankel matrix.\label{tab:benchmarks}}
\end{table}

\section{Results and discussion}
\label{sec:results}

For each of the benchmark scenarios listed in \cref{tab:benchmarks}, several \acp{rom} of increasing order and accuracy were created according to procedure described in \cref{sec:numerical-implementation}. The accuracy of the \acp{rom} will be assessed with the following relative error metric in dB
\begin{equation}
\label{eq:erel}
    \erel=20\operatorname{log}_{10}\biggl(\,\frac{\norm[\cH_2]{\sys-\sys_r}}{\norm[\cH_2]{\sys}}\biggr)=10\operatorname{log}_{10}\biggl(\,\frac{\sum_{k=1}^{2s-1}\norm[\mathrm{F}]{\smash{h_k-C_rA_r^{k-1}B_r}}^2}{\sum_{k=1}^{2s-1}\norm[\mathrm{F}]{h_k}^2}\biggr).
\end{equation}
In this section, the parameter \(r\) in \cref{eq:erel} refers to the model order of the unstructured parts, i.e. the dimension of \(A_0\), as this is a more accurate indicator of the associated cost of the model regarding storage and computation. The model order of the structured system obtained by the interconnection of input and output dead time systems \cref{eq:delay-realization} will usually be much larger. However, the dead times can be realized efficiently by suitable delay lines at a computational cost that is negligible in comparison to that of the reduced order model. Thus, whenever we speak of the model order \(r\) in the following, we refer to the dimension of \(\range{A_0}\).

\subsection{Error bounds and estimators}
\label{sec:error-bounds-and-estimators}
The performance of our proposed error estimator \cref{eq:err-est}, denoted by
\begin{equation}
    \label{eq:eest}
    \eest=20\operatorname{log}_{10}\biggl(\,\frac{\eloo}{\norm[\htwo,\eta]{G}}\biggr),
\end{equation}
is visualized in \cref{fig:bounds} alongside Kung's bound. The definition of Kung's bound in \cref{eq:kung-bound} applies to the absolute error. Thus, \cref{fig:bounds} depicts the normalized corrected bound given by
\begin{equation}
\label{eq:ekc}
\ekc=20\operatorname{log}_{10}\biggl(\,\frac{\sqrt{r+m+p}\cdot\sigma_{r+1}(\cH)}{\norm[\htwo]{\sys}}\biggr)
\end{equation}
and the normalized erroneous bound given by
\begin{equation}
\label{eq:ekw}
\ekw=10\operatorname{log}_{10}\biggl(\,\frac{\sqrt{r+m+p}\cdot\sigma_{r+1}(\cH)}{\norm[\htwo]{\sys}^2}\biggr).
\end{equation}
As reported in \cite[Section 4]{kramer2016} and \cite{kramer2018}, the bound of Kung \(\ekw\) is not sharp and can overestimate the actual error substantially. For larger model orders, this overestimation increases with model order because the decay of \(\erel\) and \(\ekw\) do not match asymptotically. Further, the bound is violated for the D1 scenario for model orders below \(1000\) (see \cref{fig:bounds-d1}). Even though this violation of Kung's bound cannot formally be considered a counterexample of \cref{thm:era} because we leverage randomization and, therefore, all bounds only hold in expectation, we are convinced that the violation does not stem from an approximation error due to randomization but see it as further evidence that the bound of Kung as stated in \cite{kung1978,kramer2016,kramer2018} is indeed flawed.

The corrected version of Kung's bound does not violate \cref{thm:era} and its difference in decay compared to \(\erel\) is reduced, albeit also increasing slightly with the model order. Unfortunately, \(\ekc\) is much larger overall, substantially amplifying the bound's overestimation of \(\erel\). In consequence, both bounds \(\ekw\) and \(\ekc\) are not particularly suited for practical applications.

Although it is not an upper bound in the formal sense, our proposed error estimator \(\eest\) mostly overestimates the actual error \(\erel\) by about \(2\)-\(6\)dB for the benchmarks we consider here (see also \cref{fig:model-comparison}). Furthermore, the decay of the estimator roughly matches the decay of the actual error, in particular, the overestimation does not worsen with model order opposed to the bound of Kung.
\begin{figure}
\centering
\begin{subfigure}{.5\textwidth}
    %\captionsetup{aboveskip=-1em}
    \centering
    \includegraphics{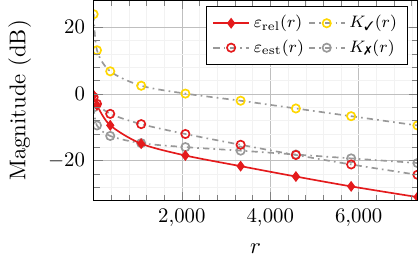}
    \caption{D1 scenario.\label{fig:bounds-d1}}
\end{subfigure}%
\begin{subfigure}{.5\textwidth}
    %\captionsetup{aboveskip=-1em}
    \hfill
    \includegraphics{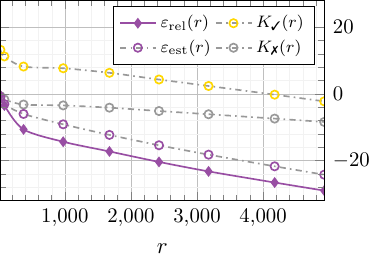}
    \caption{SHORT3 scenario.}
\end{subfigure}
\caption{Comparison of relative error, bounds, and estimators. The figure depicts the relative error \(\erel\) in \cref{eq:erel}, the erroneous and corrected version of Kung's bound \(\ekw\) in \cref{eq:ekw} and \(\ekc\) in \cref{eq:ekc}, respectively, and the proposed error estimator \(\eest\) in \cref{eq:eest} over model order for the D1 scenario and SHORT3 scenario in dB.}
\label{fig:bounds}
\end{figure}
\subsection{Dead time extraction}
\label{sec:dte}
We now focus our attention on the performance of the proposed dead time extraction via the solution of the associated \ac{dts} problem \cref{eq:dts-problem}. So far, the problem of dead time extraction is not well-studied. The only other approach for dead time extraction the authors are of is the removal of the least common dead time which was pursued by the authors previously \cite{pelling2021,kujawski2024} as well as in \cite{hilgemann2023,jalmby2021}. In this obvious approach, the least common dead time present in all \ac{siso} transmission paths given by
\begin{equation*}
    \dlc=\!\min_{\substack{i\in\{1,\,\dots,\,p\}\\j\in\{1,\,\dots,\,m\}}}\{\delta_{ij}\}
\end{equation*}
is extracted. In terms of the notation introduced in \cref{sec:dt-ssm}, this entails splitting the unstructured system into a dead-time-rectified system \(\sys_0\) and a single pure dead time system \(\Delta_{\scriptscriptstyle\mathrm{LC}}=\diag{\dlc,\,\dots,\,\dlc}\in\R[\min\{m,p\}]\) that is chained either to the input or output of \(\sys_0\) depending on its dimension to ensure a minimal representation.

Therefore, we compare the proposed dead time extraction method to the extraction of the least common dead time, as well as omitting the extraction altogether. To facilitate a meaningful comparison, the relative error \cref{eq:erel} is depicted over the \acp{dof} of the model rather than model order in \cref{fig:dtc}. For a \ac{rom} without dead time extraction, the \acp{dof} are given by 
\begin{equation}
\label{eq:dofsnone}
    \dofsnone=(r+p)\cdot(r+m).
\end{equation} For our method, the \acp{dof} are given by 
\begin{equation}
\label{dofs:dts}
\dofsdts=\dofsnone+\sum_{i=1}^p\theta_i+\sum_{j=1}^m\tau_j,
\end{equation}
i.e. the number of non-zero entries in \cref{eq:delay-realization}. For the extraction of the least common dead time, we have
\begin{equation}
\label{eq:dofslc}
    \dofslc=\dofsnone+\min\{m,p\}\cdot\dlc.
\end{equation}

Our numerical experiments reveal that an extraction of the least common dead time achieves the same relative error with less \acp{dof} compared to an omission of dead time extraction. Further, our proposed \ac{dts}-based dead time extraction method strictly outperforms both other considered methods.

\begin{figure}[p]
\centering
\begin{subfigure}{.5\textwidth}
    %\captionsetup{aboveskip=-1em}
    \centering
    \includegraphics{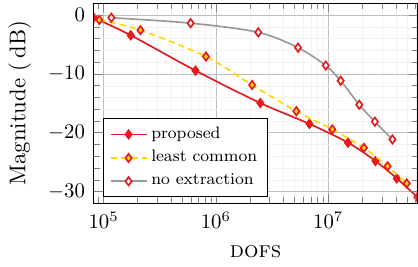}
    \caption{D1 scenario.\label{fig:dtc-miracle-d1}}
\end{subfigure}\vspace*{1em}%
\begin{subfigure}{.5\textwidth}
    %\captionsetup{aboveskip=-1em}
    \hfill
    \includegraphics{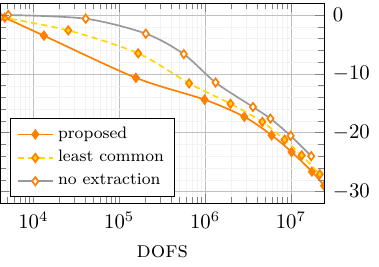}    
    \caption{SHORT3 scenario.\label{fig:dtc-mird-short3}}
\end{subfigure}
\begin{subfigure}{.5\textwidth}
    %\captionsetup{aboveskip=-1em}
    \centering
    \includegraphics{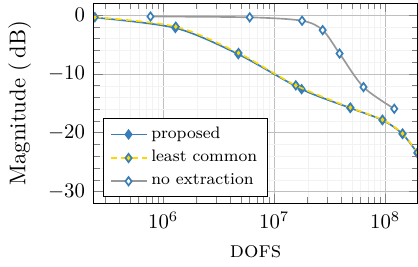}    
    \caption{A1-C1 scenario.\label{fig:dtc-miracle-a1red}}
\end{subfigure}\vspace*{1em}%
\begin{subfigure}{.5\textwidth}
    %\captionsetup{aboveskip=-1em}
    \hfill
    \includegraphics{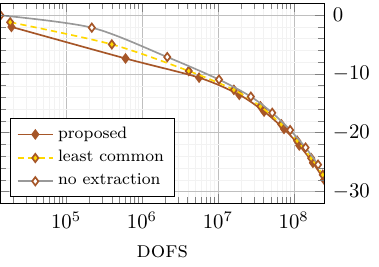}    
    \caption{MID3 scenario.\label{fig:dtc-mird-mid3}}
\end{subfigure}
\begin{subfigure}{.5\textwidth}
    %\captionsetup{aboveskip=-1em}
    \centering
    \includegraphics{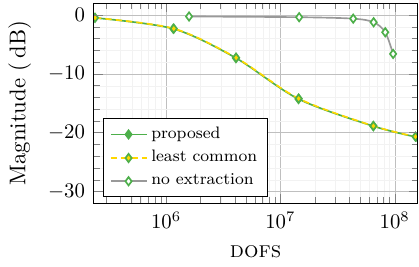}    
    \caption{A2-C1 scenario.\label{fig:dtc-miracle-a2red}}
\end{subfigure}%
\begin{subfigure}{.5\textwidth}
    %\captionsetup{aboveskip=-1em}
    \hfill
    \includegraphics{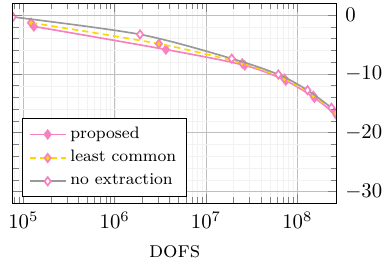}    
    \caption{LONG3 scenario.\label{fig:dtc-mird-long3}}
\end{subfigure}
\caption{Comparison of dead time extraction methods for selected benchmark scenarios. The plot depicts the relative error in dB over \acp{dof} for the proposed method based on the \ac{dts} problem, the extraction of the least common dead time, and no extraction. Diamonds indicate orders for which a model was constructed.\label{fig:dtc}}
\end{figure}

Interestingly, the reduction of \acp{dof} of our method over the removal of the least common dead time is less pronounced for D1 than for SHORT3, as can be seen in \cref{fig:dtc}. This phenomenon can be explained by the geometrical arrangement of sources and receivers in the scenarios: For \ac{miracle}, the sources and receivers are arranged on two parallel planes, whereas with \ac{mird}, the sources are placed on two semicircles around the linear array of receivers.
Roughly speaking, an extraction of the least common dead time will compensate for the dead time caused by the radius of the inner semicircle with \ac{mird}. A \ac{dts}-based extraction can effectively also extract the dead time caused by the radius of the outer semicircle. \cref{fig:dtc-mird-short3,fig:dtc-mird-mid3,fig:dtc-mird-long3} reveal that the reduction of required \acp{dof} is about twice as large with proposed \ac{dts}-based extraction over the extraction of the least common dead time for lower model orders of the \ac{mird} benchmarks.
On the contrary, this property is not observed for the D1 scenario in \cref{fig:dtc-miracle-d1} because the \ac{dts}-based method yields diminishing returns, once the least common dead time is extracted for a planar arrangement of sources and receivers. As the extent of the source plane increases, extracted dead times of the \ac{dts}-based method approach the least common dead time. As a result, both methods lead to very similar results for planar arrangements where the extent of both planes is equal. This effect is clearly recognisable in \cref{fig:dtc-miracle-a1red,fig:dtc-miracle-a2red}.
\subsection{Computational performance}
The overall performance of the proposed adaptive randomized \ac{era} method in \cref{algo:randera} is very positive for the considered benchmarks both regarding \ac{rom} accuracy and computational efficiency.
The asymptotic decay of the relative error and the proposed error estimator is mainly dictated by the complexity of dynamics, i.e. the number of (dynamically independent) inputs and outputs, the decay of the \acp{ir}, and the amount of dead time in the data. As outlined in the previous subsection, our proposed dead time extraction scheme can extract most dead times but depending on the scenario, residual dead times can remain.
Considering \cref{fig:overview-miracle}, one can see that the error decay is lower for the scenarios where the source plane spans a larger spatial area, thus a higher amount of residual dead times is present. Furthermore, the addition of a reflection in R2-C1 clearly manifests in a larger error, opposed to A2-C1. Somewhat surprisingly, the error decay is noticeably lower for A1-C1 compared to A2-C1 when, at first sight, both scenarios should be fairly similar after rectification of the dead times. This phenomenon cam be explained by the directivity of the employed measurement loudspeaker \cite[Fig. 5]{kujawski2024}. For the closer source-receiver-plane distance, a wider range of radiation angles is captured by the measurements, leading to a higher complexity of dynamics that need to be encoded in the \ac{rom}. This effect of source directivity is negligible in the \ac{mird} scenarios because, here, the source speakers are always pointed directly at the receivers. Because the geometric arrangement is not altered across the scenarios, the different error decays in \cref{fig:overview-mird} are explained entirely by the different reverberation times of the scenarios.

Theoretically, the computational runtime of randomized \ac{era} scales linearithmically (“n-log-n”) with the length of the \acp{ir} and quadratically with the order \(r\) of the constructed \ac{rom} \cite[Table 3.1]{minster2021}. For the considered benchmarks, the runtimes indeed are in \(\bigO{r^2}\) as can be gathered from \cref{fig:runtime}. As the reported runtimes not only include the adaptive \ac{era} routine but also the perpetual a posteriori evaluation of the relative error, this speaks towards the efficiency of our implementation.
\begin{figure}
\centering
\begin{subfigure}{.5\textwidth}
    %\captionsetup{aboveskip=-1em}
    \centering
    \includegraphics{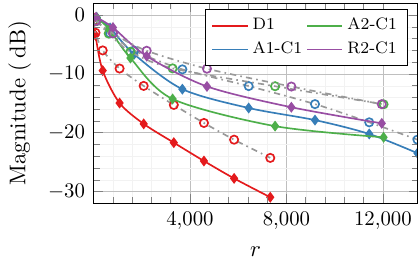}        
    \caption{\acs{miracle} scenarios.\label{fig:overview-miracle}}
\end{subfigure}%
\begin{subfigure}{.5\textwidth}
    %\captionsetup{aboveskip=-1em}
    \hfill
    \includegraphics{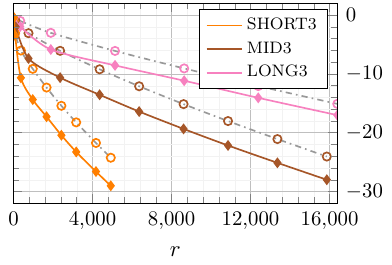}
    \caption{\acs{mird} scenarios.\label{fig:overview-mird}}
\end{subfigure}
\caption{Performance overview of randomized \ac{era} and the proposed \ac{loo} error estimator for all considered benchmarks. The plot depicts the relative error (diamonds) and estimator (circles) in dB over model order. Diamonds indicate orders for which a model was constructed.}
\end{figure}
Finally, a qualitative impression of the \acp{rom} is given in \cref{fig:d1-ir,fig:d1-tf,fig:largest-tf} where single channel \acp{ir} and frequency responses are depicted alongside the measurement data and resulting modelling error.
\begin{figure}
\centering
\begin{subfigure}{.5\textwidth}
    %\captionsetup{aboveskip=-1em}
    \centering
    \includegraphics{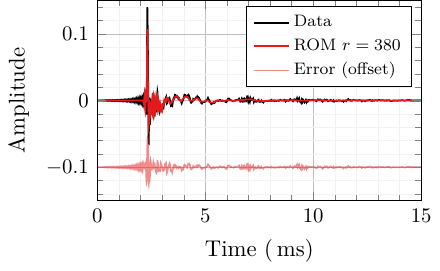}
    \caption{\ac{rom} of order \(380\).}
\end{subfigure}%
\begin{subfigure}{.5\textwidth}
    %\captionsetup{aboveskip=-1em}
    \hfill
    \includegraphics{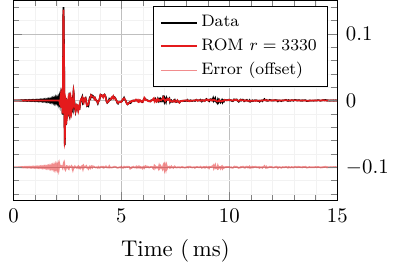}
    \caption{\ac{rom} of order \(3330\).}
\end{subfigure}
\caption{\acp{ir} of different \acp{rom} for the \acs{miracle}-D1 scenario. Each subplot depicts the \ac{ir} of the \ac{rom} alongside the \ac{ir} measurement and the resulting modelling error which is offset by \(-0.1\) for better visibility. Each \ac{rom} is actually a \ac{mimo} system with \(1024\) inputs and \(64\) outputs; the subplots only contain the transmission between the first input and first output.\label{fig:d1-ir}}
\end{figure}

\begin{figure}
\centering
\begin{subfigure}{.5\textwidth}
    %\captionsetup{aboveskip=-1em}
    \centering
    \includegraphics{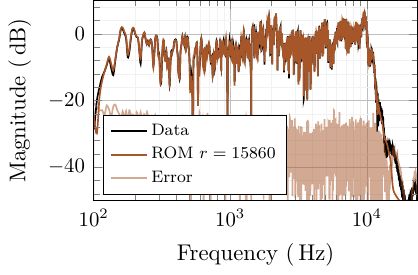}
    \caption{Largest \ac{rom} for MID3 scenario.\label{fig:mid3-tf}}
\end{subfigure}%
\begin{subfigure}{.5\textwidth}
    %\captionsetup{aboveskip=-1em}
    \hfill
    \includegraphics{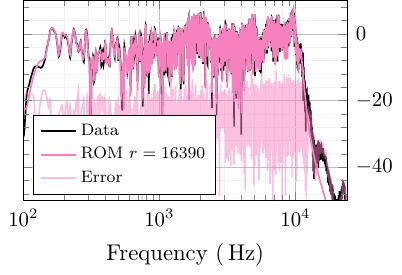}
    \caption{Largest \ac{rom} for LONG3 scenario.\label{fig:long3-tf}}
\end{subfigure}
\caption{Frequency responses of the largest constructed \acp{rom}. Each subplot depicts the frequency response of the \ac{rom} alongside the magnitude of the Fourier-transformed \ac{ir} measurement and the resulting modelling error. Each \ac{rom} is actually a \ac{mimo} system with \(26\) inputs and \(8\) outputs; the subplots only contain the transmission between the first input and first output.\label{fig:largest-tf}}
\end{figure}
\begin{figure}
\centering
\begin{subfigure}{.5\textwidth}
    %\captionsetup{aboveskip=-1em}
    \centering
    \includegraphics{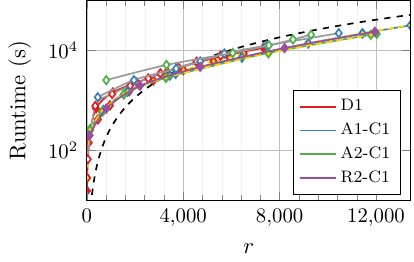}
    \caption{\ac{miracle} scenarios.}
\end{subfigure}%
\begin{subfigure}{.5\textwidth}
    %\captionsetup{aboveskip=-1em}
    \hfill
    \includegraphics{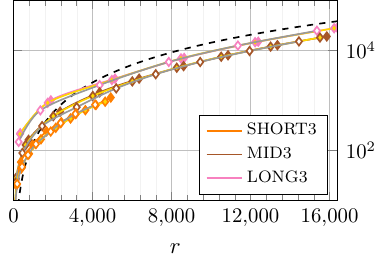}
    \caption{\ac{mird} scenarios.}
\end{subfigure}
\caption{Overview of the computational runtimes.\label{fig:runtime}. The plot depicts the runtime in seconds over model order. The colours encode the scenario, whereas the line style encodes the employed dead time extraction scheme according to \cref{fig:dtc}. For reference, the graph of a suitably scaled quadratic function is depicted as a black dashed line.}
\end{figure}
\section{Conclusion}
\label{sec:conclusion}
This paper presents significant advancements in the field of reduced order modelling for discrete-time \ac{lti} systems with input-output dead time based on \ac{ir} measurement data, particularly in acoustical engineering applications. We introduce a novel technique for extracting input and output dead times from measurement data by formulating and solving a linear program, which we refer to as the \acl{dts} problem. This approach enables the construction of more accurate, therefore efficient, \acp{rom} by better compensating for a wider range of propagation delays compared to previous methods.
We further develop an adaptive, randomized \ac{era} pipeline that leverages recent advances in numerical linear algebra, such as a randomized \ac{loo} error estimator and a memory-efficient QR algorithm. This enables scalable model reduction for large-scale datasets. Our software implementation is made available as open-source Python code \cite{pelling2025}, promoting reproducibility and enabling engineers and further research.
The proposed method is benchmarked on high-dimensional datasets, including \ac{miracle} \cite{kujawski2023} and \ac{mird} \cite{hadad2014}, representing the largest application of \ac{era} in terms of input data dimension and constructed model size.
Further, this paper identifies and corrects a typing error in the classical \ac{era} error bound \cite{kung1978}, providing a more accurate theoretical foundation for error estimation.

The proposed method can be extended to other fields where large-scale, data-driven \ac{mor} is required, such as control systems, signal processing, and multi-physics simulations.
The computational efficiency and robustness of the pipeline could be improved further, possibly by integrating more advanced randomized algorithms or out-of-core computations, to enable the construction of larger \acp{rom}. 
Recent development of tensor-based approximation methods offer ways to further exploit structure in the measurement data. Both a comparison to direct low-rank tensor modelling methods \cite{jalmby2021} but also the incorporation of low-rank tensor approximations directly into the \ac{era} pipeline as done in \cite{kilmer2022} could be avenues of future research. A \ac{dts}-based dead time extraction might also prove beneficial in both approaches.
Even though we show that the proposed randomized error estimator performs well in our applications, it is only heuristic and lacks a thorough mathematical analysis. Recent work \cite{oymak2022,he2025} demonstrates that the classical results on error analysis for \ac{era} are worth revisiting and offer new approaches.
\section*{Funding sources}
Funded by the Deutsche Forschungsgemeinschaft (DFG, German
Research Foundation) – project no.: \href{https://gepris.dfg.de/gepris/projekt/504367810?language=en}{504367810}.

\section*{Acknowledgements}
The authors would like to thank Ethan Epperly for his insightful comments on randomized error estimation, as well as providing a detailed mathematical derivation of the leave-one-out error estimator with power iterations.

\section{Acronyms}
\begin{acronym}[MIRACLE]\itemsep0pt
    \acro{blas}[BLAS]{Basic Linear Algebra Subprograms}
    \acro{bt}[BT]{Balanced Truncation}
    \acro{dft}[DFT]{discrete Fourier transform}
    \acro{dts}[DTS]{dead time splitting}
    \acro{dof}[DOF]{degree of freedom}
    \acroplural{dof}[DOFs]{degrees of freedom}
    \acro{era}[ERA]{Eigensystem Realization Algorithm}
    \acro{evd}[EVD]{eigenvalue decomposition}
    \acro{fir}[FIR]{finite impulse response}
    \acro{flop}[FLOP]{floating point operation}
    \acro{fom}[FOM]{full order model}
    \acro{hrtf}[HRTF]{head-related transfer function}
    \acro{iir}[IIR]{infinite impulse response}
    \acro{ir}[IR]{impulse response}
    \acro{io}[IO]{input-output}
    \acro{lapack}[LAPACK]{Linear Algebra Package}
    \acro{lti}[LTI]{linear time-invariant}
    \acro{loo}[LOO]{leave-one-out}
    \acro{mimo}[MIMO]{multiple-input-multiple-output}
    \acro{miracle}[MIRACLE]{\textit{Microphone Array Impulse Response Database for Acoustic Learning}}
    \acro{mird}[MIRD]{\textit{Multi-Channel Impulse Response Database}}
    \acro{miso}[MISO]{multiple-input-single-output}
    \acro{mor}[MOR]{model order reduction}
    \acro{rir}[RIR]{room impulse response}    
    \acro{rom}[ROM]{reduced order model}
    \acro{rsvd}[RSVD]{randomized \acs{svd}}
    \acro{simo}[SIMO]{single-input-multiple-output}    
    \acro{siso}[SISO]{single-input-single-output}
    \acro{svd}[SVD]{singular value decomposition}
    \acro{tde}[TDE]{time delay estimation}
    \acro{tera}[TERA]{Tangential Eigensystem Realization Algorithm}
\end{acronym}
\appendix
\section{Numerical Algorithms}
\label{sec:algos}
For the sake of reproducibility, this section contains a brief description of all numerical subroutines that are required by \cref{algo:randera}.
The computational centrepiece of randomized \ac{era} is \ac{rsvd}. We will not provide a derivation of the \ac{rsvd} algorithm here. Instead, the reader is referred to \cite{halko2011} for details. Many variants and augmentations of \ac{rsvd}, or rather the so-called \emph{randomized range finder}, can be found in the literature \cite{martinsson2020,halko2011}. Since the intended use is an adaptive pipeline that will draw new samples as necessary, we choose a formulation without oversampling. The following augmentations are made to the core \ac{rsvd} algorithm (\cite[Alg. 5.1]{halko2011}):
\begin{description}
    \item[Adaptivity] The \ac{loo} error estimator proposed in \cite{epperly2024} is used as a quality measure of the decomposition. If the estimator lies above the prescribed tolerance \(\etol\), the range basis is refined by drawing additional random samples (line \ref{alg:line:O}). Importantly, this refinement is done efficiently by updating the existing basis (line \ref{alg:line:update}). To see how the error tolerance \(\etol\) of the error estimator relates to the \(\htwo\)-error of the \ac{rom}, see \cref{sec:err-est}.
    \item[Power iterations] In previous numerical experiments, we observed that power iterations (or subspace iterations) greatly improve the accuracy of \ac{rsvd} in our applications \cite{pelling2021,kujawski2024}. Hence, \cref{algo:randsvd} includes power iterations (line \ref{alg:line:Y}). The number of power iterations in the reported experiments was fixed to \(q=2\) as it offered a substantial improvement of the error decay. Any higher number of power iterations leads to diminishing returns in terms of error. The use of power iterations also affects the \ac{loo} error estimator. Hence, a version with power iterations was used, which is detailed in \cref{algo:looest}.
    \item[Communication-avoiding orthogonalization] During numerical tests, it became apparent that the perpetual orthogonalization of the random samples is heavily constrained by memory bandwidth for larger model orders, rendering the default (modified) Gram-Schmidt algorithm in pyMOR ineffective. Since access to individual matrix elements is not possible in pyMOR's abstract vector array interface, a so-called communication-avoiding \emph{triangular orthogonalization} algorithm based on the shifted CholeskyQR algorithm \cite{fukaya2020} was implemented instead. To enable our proposed adaptive pipeline, an updating scheme was developed based on \cref{prop:cholqrupdate}. The orthogonalization step with the proposed procedure was up to \(300\) times faster than Gram-Schmidt for the larger benchmarks.
\end{description}
\begin{myalgo}
\caption{(\textsc{RandSVD}): Adaptive \acs{rsvd} with \acs{loo} error estimator.\label{algo:randsvd}}
    \begin{algorithmic}[1]
        \label[randsvd]{randsvd}
        \Require {Matrix \(X\in\R[m][n]\), block size \(q\in\N\), number of power iterations \(q\in\N_0\), approximation tolerance \(\etol>0\), machine epsilon \(\meps>0\).}
        \Ensure {\(U\in\R[m][r]\), \(\bSigma\in\R[r][r]\), \(V\in\R[n][r]\) satisfying \(\mathbb{E}(\norm[\mathrm{F}]{\smash{U\bSigma V^\top-X}})\leq\etol\).}
        \State \(\bOmega\in\R[n][b]\gets\texttt{randn}(n,b)\) \Comment{Standard Gaussian distribution.}
        \State \(Z\gets X\bOmega\)
        \State \(Y\gets(XX^\top)^qZ\) \label{alg:line:Y}
        \State \(Q,R\gets\textsc{ShiftedCholQR}(Y,\meps)\) \Comment{\cref{algo:cholqr}}
        \While {\(\textsc{LOOEst}(Z, Q, R, q)>\etol\)} \Comment{\cref{algo:looest}}
            \State \(\bOmega\in\R[n][b]\gets\texttt{randn}(n,b)\) \Comment{Standard Gaussian distribution.} \label{alg:line:O}
            \State \(Z_b\gets X\Omega\)
            \State \(Y_b\gets (XX^\top)^qZ_b\)
            \State \(Q,R\gets\textsc{CholQRUpdate}(Q,R,Y_b,\meps)\) \label{alg:line:update} \Comment{\cref{algo:cholqrupdate}}
            \State \(Z\gets [Z\quad Z_b]\)
        \EndWhile
        \State \(\tilde{U},\bSigma,V\gets\texttt{svd}(Q^\top X)\) \Comment{Economy \ac{svd}}
        \State \(U\gets Q\tilde{U}\)
    \end{algorithmic}
\end{myalgo}
\begin{myalgo}
\caption{(\textsc{LOOest}): \ac{loo} error estimator with power iterations \cite[Supplementary Materials: Program SM5]{epperly2024}, \cite{epperly2024pc}. \label{algo:looest}}
    \begin{algorithmic}[1]
        \label[looest]{looest}
        \Require{Random range samples \(Z\in\R[m][r]\), orthonormal  basis \(Q\in\R[m][r]\), upper-triangular factor \(R\in\R[r][r]\), and number of power iterations \(q\in\N_0\) as in \cref{algo:randsvd}.}
        \Ensure{\ac{loo} error estimator \(\eloo\) satisfying \cref{eq:eloo-expec}.}
        \State \(\begin{bmatrix}e_1&\cdots&e_r\end{bmatrix}\gets R^{-\top}\)        
        \If {\(q=0\)}
            \State \(\eloo\gets \sqrt{r^{-1}\sum_{j=1}^r\norm{e_j}^{-2}}\)
        \Else
            \State \(\mathbf{T}=\begin{bmatrix}t_1&\cdots&t_r\end{bmatrix}\gets \begin{bmatrix}e_1/\norm{e_1}&\cdots&e_r/\norm{e_r}\end{bmatrix}\)
            \State \(d\gets\begin{bmatrix}t_1Qz_1&\cdots&t_rQz_r\end{bmatrix}\)
            \State \(\eloo\gets r^{-1/2}\norm[\mathrm{F}]{\smash{Z-QQ^\top Z+Q\mathbf{T}\cdot\diag{d}}}\)
        \EndIf
    \end{algorithmic}
\end{myalgo}
\begin{algorithm}
\caption{(\textsc{ShiftedCholQR}): Iterated Cholesky QR with shift recomputation \cite{fukaya2020}.\label{algo:cholqr}
}
    \begin{algorithmic}[1]
        \label[cholqr]{cholqr}
        \Require{Matrix \(Y\in\R[m][r]\), machine epsilon \(\meps>0\).}
        \Ensure{\(Q\in\R[m][r]\), \(R\in\R[r][r]\) satisfying \(QR=Y\).}
        \State \(Q\gets Y\)
        \State \(R\gets I_r\)        
        \State \(X\gets Y^\top Y\)
        \While {\(\norm[\mathrm{F}]{\smash{X-I_r}}\geq\meps\sqrt{r}\)}
            \State {\(\tilde{R}\gets\texttt{chol}(X)\)} \Comment{Cholesky decomposition.}
            \If {\(\texttt{chol}(X)\) breaks down}
                \State \(\sigma\gets 11(mr+r(r+1))\meps\norm{X}\) \Comment{Recompute shift.}
                \State \(X\gets X+\sigma I_r\)
                \State \(\tilde{R}\gets\texttt{chol}(X)\) \Comment{Cholesky decomposition.}
            \EndIf
        \State \(Q\gets Q\tilde{R}^{-1}\)
        \State \(R\gets\tilde{R}R\)
        \State {\(X\gets Q^\top Q\)}        
        \EndWhile
    \end{algorithmic}
\end{algorithm}
\begin{myalgo}
\caption{(\textsc{CholQRUpdate}): Column update for shifted CholeskyQR.\label{algo:cholqrupdate}}
    \begin{algorithmic}[1]
        \label[cholqrupdate]{cholqrupdate}
        \Require{Existing QR decomposition \(Q\in\R[m][r]\), \(R\in\R[r][r]\), new columns \(Y_b\in\R[m][b]\), machine epsilon \(\meps>0\).}
        \Ensure{\(\tilde{Q}\in\R[m][r]\), \(\tilde{R}\in\R[r][r]\) satisfying \(\tilde{Q}\tilde{R}=[QR\quad Y_b]\).}
        \State \(Q_b\gets Y_b\)
        \State \(R_b\gets I_b\)
        \State \(B\gets Q^\top Y_b\)
        \State \(X\gets Y_b^\top Y_b-BB^\top\)
        \While {\(\norm[\mathrm{F}]{\smash{Q_b^\top Q_b-I_b}}\geq\meps\sqrt{b}\)}
            \While {\(\tilde{R}_b\gets\texttt{chol}(X)\) breaks down}
                \State \(\sigma\gets 11(mr+r(r+1))\meps\norm{X}\) \Comment{Recompute shift.}
                \State \(X\gets X+\sigma I_b\)
            \EndWhile
        \State \(Q_b\gets (Q_b-QB)\tilde{R}_b^{-1}\)
        \State \(R_b\gets\tilde{R}_bR_b\)                
        \State \(B\gets Q^\top Q_b\) \Comment{Update \(B\) for next iteration.}        
        \State \(X\gets Q_b^\top Q_b-BB^\top\) \Comment{Update \(X\) for next iteration.}
        \EndWhile        
        \State \(\tilde{Q}\gets\begin{bmatrix}
            Q&Q_b
        \end{bmatrix}\)
        \State \(\tilde{R}\gets \begin{bmatrix}
            R&B\\0&R_b
        \end{bmatrix}\)
    \end{algorithmic}
\end{myalgo}
\clearpage
\begin{proposition}
\label{prop:cholqrupdate}
    Let \(Q\in\R[m][r]\), \(R\in\R[r][r]\) be a QR decomposition \(QR=Y\in\R[m][r]\) and let \(R_b\) be given by the Cholesky factorization
    \begin{equation*}
         R_b^\top R_b=(Y_b -QQ^\top Y_b)^\top(Y_b-QQ^\top Y_b)=Y_b^\top Y_b-Y_b^\top QQ^\top Y_b
    \end{equation*} for some \(Y_b\in\R[m][b]\). Then,
    \begin{equation*}
        \tilde{Q}=\begin{bmatrix}
            Q&(Y_b-QQ^\top Y_b)R_b^{-1}
        \end{bmatrix}\quad\mathrm{and}\quad\tilde{R}=
        \begin{bmatrix}
            R&Q^\top Y_b\\0&R_b
        \end{bmatrix}
    \end{equation*}
    is a QR decomposition of \(\tilde{Y}=[Y\quad Y_b]\in\R[m][(r+b)]\).
\end{proposition}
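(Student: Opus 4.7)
The plan is to verify directly that the proposed $\tilde{Q}$ and $\tilde{R}$ satisfy the three defining properties of a (thin) QR decomposition of $\tilde{Y}$: upper-triangularity of $\tilde{R}$, the factorization identity $\tilde{Q}\tilde{R}=\tilde{Y}$, and orthonormality of the columns of $\tilde{Q}$. The underlying geometric idea is that $W\coloneqq (Y_b - QQ^\top Y_b)$ is precisely the projection of $Y_b$ onto the orthogonal complement of $\range{Q}$, so the Cholesky factor $R_b$ of $W^\top W$ is exactly the triangular factor needed to orthonormalize the new columns against the existing basis, while $Q^\top Y_b$ supplies the coefficients describing the components of $Y_b$ lying in $\range{Q}$.

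First I would observe that $\tilde{R}$ is block upper-triangular with upper-triangular diagonal blocks $R$ (given) and $R_b$ (a Cholesky factor), hence upper-triangular. Next I would verify the factorization by straightforward block multiplication:
\begin{equation*}
\tilde{Q}\tilde{R}
=\begin{bmatrix} QR & QQ^\top Y_b + (Y_b - QQ^\top Y_b)R_b^{-1}R_b\end{bmatrix}
=\begin{bmatrix} Y & Y_b\end{bmatrix}=\tilde{Y},
\end{equation*}
using $QR=Y$ and the trivial cancellation in the second block column.

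Finally, orthonormality of $\tilde{Q}$ reduces to checking three Gram blocks. The diagonal block $Q^\top Q = I_r$ is given. For the off-diagonal block, substituting and using $Q^\top Q = I_r$ yields
\begin{equation*}
Q^\top(Y_b-QQ^\top Y_b)R_b^{-1}=(Q^\top Y_b - Q^\top Y_b)R_b^{-1}=0.
\end{equation*}
For the remaining diagonal block, the defining identity of $R_b$ as the Cholesky factor of $W^\top W$ gives
\begin{equation*}
R_b^{-\top}(Y_b-QQ^\top Y_b)^\top(Y_b-QQ^\top Y_b)R_b^{-1}=R_b^{-\top}R_b^\top R_b R_b^{-1}=I_b.
\end{equation*}

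There is no real obstacle here beyond bookkeeping, but the one subtle point worth flagging is well-definedness of $R_b^{-1}$: the Cholesky factorization exists and is invertible exactly when $W=Y_b-QQ^\top Y_b$ has full column rank, equivalently when the columns of $Y_b$ are linearly independent modulo $\range{Q}$. In the algorithmic setting of \cref{algo:cholqrupdate} this is precisely what the shift recomputation safeguards against, so mentioning this hypothesis (or assuming it implicitly, as in the algorithm's breakdown-handling loop) completes the proof.
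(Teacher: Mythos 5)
Your proof is correct and follows essentially the same route as the paper's: direct block multiplication to verify $\tilde{Q}\tilde{R}=\tilde{Y}$, followed by computing the Gram blocks of $\tilde{Q}^\top\tilde{Q}$ using the defining Cholesky identity for $R_b$. Your added remark on the invertibility of $R_b$ (i.e.\ that $Y_b-QQ^\top Y_b$ must have full column rank) is a worthwhile observation that the paper leaves implicit, but it does not change the argument.
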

\begin{proof}
Firstly, we have
\begin{equation*}
    \tilde{Q}\tilde{R}=\begin{bmatrix}
        QR&QQ^\top Y_b+ (Y_b -QQ^\top Y_b)R_b^{-1}R_b
    \end{bmatrix}=\begin{bmatrix}
        Y&Y_b
    \end{bmatrix}=\tilde{Y}.
\end{equation*}
Secondly, \(\tilde{R}\) is upper triangular and \(\tilde{Q}\) is orthogonal as can be verified by
\begin{align*}
    \tilde{Q}^\top\tilde{Q}\!&=\!\!\begin{bmatrix}
        Q^\top Q&Q^\top(Y_b-QQ^\top Y_b)R_b^{-1}\\R_b^{-\top}\!(Y_b^\top-Y_b^\top QQ^\top)Q&R_b^{-\top}\!(Y_b- QQ^\top Y_b)^\top\!(Y_b-QQ^\top Y_b)R_b^{-1}
        \end{bmatrix}\\
    &=I_{r+b}.
\end{align*}
\end{proof}
\bibliographystyle{elsarticle-num}
\bibliography{references.bib}
\end{document}